\documentclass[11pt]{article}

\usepackage{amsmath}
\usepackage{amssymb}
\usepackage{amsthm}
\usepackage{mathtools}
\usepackage{bm}
\usepackage{graphicx}
\usepackage{hyperref}
\newcommand{\ext}{\operatorname{ext}}

\usepackage[margin=1in]{geometry}
\usepackage{amsmath} 
\usepackage{amssymb} 
\usepackage{amsbsy} 
\usepackage{amsthm} 
\usepackage{amsfonts}
\usepackage{thmtools}
\usepackage{thm-restate}
\usepackage{bbm}
\usepackage{url}
\usepackage{booktabs}
\usepackage{mathrsfs}
\usepackage{multirow}
\usepackage{chngpage}
\usepackage{subcaption}
\usepackage{float}
\usepackage{subfiles}
\usepackage[math]{cellspace}
\usepackage[font={small}]{caption}
\usepackage{tikz, pgfplots}
\usepackage{algpseudocode}
\usepackage{graphicx}
\usepackage{algorithm}
\usepackage{fullpage}
\usepackage{chngcntr}
\usepackage[square,sort,comma,numbers]{natbib}
\usepackage{soul}
\usepackage{mathtools}
\usepackage{array}
\usepackage{arydshln}
\usepackage[letterspace=-85]{microtype}
\usepackage{nicefrac}
\usepackage{hyperref}

\usetikzlibrary{shapes.geometric,arrows,positioning,calc}
\usetikzlibrary{patterns}
\usetikzlibrary{decorations.pathmorphing,backgrounds,fit,matrix,decorations.pathreplacing}

\newcommand*{\pfstart}{\begin{proof}}
\newcommand*{\pfend}{\end{proof}}

\tikzstyle{vertex} = [circle, minimum size=0.1cm, inner sep=0pt, draw=black, fill=black]
\tikzstyle{circ} = [circle, minimum width=0.5mm, inner sep=0pt,draw,fill]
\tikzstyle{hcirc} = [circle,minimum width=5mm, inner sep=0pt,draw]
\tikzstyle{bcirc} = [circle, minimum width=1.5mm, inner sep=0pt,draw,fill]
\tikzstyle{bhcirc} = [circle, minimum width=1.5mm, inner sep=0pt,draw, dotted ]
\tikzstyle{ept} = [circle,minimum width=0mm, inner sep=0pt, white]
\tikzstyle{txt} = [text width=1.3cm,draw,rounded corners=3pt]
\tikzstyle{ncirc} = [circle,draw=black, inner sep=1pt]

\DeclarePairedDelimiter\iprod{\langle}{\rangle}

\algrenewcommand\algorithmicindent{1em}

\theoremstyle{remark}

\usepackage{bm}
\theoremstyle{remark}
\newtheorem*{claim*}{Claim}
\theoremstyle{remark}
\newtheorem*{remark*}{Remark}
\theoremstyle{remark}

\theoremstyle{plain}
\newtheorem{proposition}{Proposition}

\theoremstyle{plain}
\newtheorem{lemma}{Lemma}

\theoremstyle{definition}

\theoremstyle{definition}
\newtheorem*{assumption*}{Assumption}

\theoremstyle{definition}
\newtheorem*{example*}{Example}

\declaretheorem[name=Example, style=definition]{example}
\renewcommand\thmcontinues[1]{Cont.}

\theoremstyle{definition}
\newtheorem{definition}{Definition}

\theoremstyle{plain}
\newtheorem{theorem}{Theorem}

\theoremstyle{plain}
\newtheorem{corollary}{Corollary}

\graphicspath{{Figures/}}  

\usepackage{adjustbox}
\usepackage{natbib} 

\usepackage{adjustbox}
\usepackage{caption}

\newcommand{\TABLE}[3]{%
  \caption{#1}%
  \vspace{2pt}
  \noindent
  \begin{adjustbox}{max width=\textwidth}
  #2%
  \end{adjustbox}%
  \par\vspace{2pt}
  \noindent{\footnotesize #3}%
}

\theoremstyle{definition}

\usepackage{amsthm}
\usepackage{enumitem}

\hypersetup{
    colorlinks=true,
    citecolor=black,
    linkcolor=black,
    filecolor=magenta,      
    urlcolor=cyan,
}

\pgfplotsset{compat=1.15}
\tikzset{snake it/.style={decorate, decoration={snake, amplitude=.3mm, segment length=2mm}}}

\allowdisplaybreaks

\begin{document}

\title{Robust Network Enhancement: Formulations and Approximations}
	\author{
     Zolykha Rezaei
    \thanks{
    Department of Industrial, Manufacturing \& Systems Engineering, Texas Tech University, Lubbock, TX 79409 (email: {\tt zrezaei@ttu.edu}).}
    \and
    Ningji Wei
    \thanks{
    Department of Industrial, Manufacturing \& Systems Engineering, Texas Tech University, Lubbock, TX 79409 (email: {\tt ningji.wei@ttu.edu}). This work was partially supported by AFOSR grant FA9550-26-1-B345 and NSF grant 2548574.}
}
	\date{}
	\maketitle
\begin{abstract}
We study robust network enhancement problems that improve network performance through component upgrades, where both network utilities and enhancement costs are uncertain. We consider two formulations: a lower-bound problem that improves the minimum utility across feasible network structures and an upper-bound problem that improves the maximum utility. For the lower-bound problem, we develop an exact cut-generation reformulation and identify the robust separation subproblem as a key computational bottleneck. 
We characterize conditions under which fractional extreme points arise in the linearized robust formulation and demonstrate that such points can be optimal.
For the upper-bound problem, we derive two exact reformulations and compare the strength of their linear relaxations. To improve computational tractability, we introduce a box approximation of the utility uncertainty set that preserves solution robustness and provides explicit performance guarantees for both lower- and upper-bound problems. Computational experiments on spanning tree instances show that the proposed approximation improves computational efficiency for both problems while maintaining solution quality close to that of the exact formulations.
\par\vspace{0.7em}
\noindent\textbf{Keywords:}{ interdiction, network upgrade optimization, robust optimization} 
\end{abstract}

\section{Introduction}

Infrastructure networks such as power grids, transportation networks,
communication systems, and supply chains play a critical role in supporting
economic and societal activities. The performance of these systems, however,
can deteriorate because of component failures, capacity limitations,
operational variability, or adversarial disruptions. To
maintain or improve network performance in a cost-effective manner, decision
makers may enhance a selected subset of existing components rather than
redesigning the entire network. These considerations have motivated network enhancement problems across a wide range of applications, including utility infrastructure
\citep{sabouhi2021electricity,wang2023collaborative}, public transportation
\citep{chen2023resilience,fan2011bilevel}, cybersecurity
\citep{wang2019cybersecurity}, and supply chains
\citep{shen2024multiobjective}. 

In many of these application settings, a common enhancement approach is to select a set of network components, such as vertices or edges, to improve infrastructure performance. For a given graph $\mathcal G=(V,E)$, we represent the infrastructure by a set of network structures, denoted by $\Omega$, which may include paths, flows, spanning trees, or other relevant subnetworks. For example, in power and water distribution networks, spanning trees are often used to represent feasible structures for delivering resources to the communities served by the network \citep{bartolin2005topological,mair2014spanning,sarkar2015kruskal}. Each component $e\in E$ has an additive utility $u_e\geq 0$, which can be increased by a fixed amount $\hat u_e$ at an enhancement cost $c_e$, thereby improving any feasible structure containing that edge. Since the minimum- and maximum-utility feasible structures characterize the network's attainable performance bounds, we focus on two corresponding enhancement problems.
Let $x\in\mathcal X$ indicate the components to enhance and $\mathcal Y_\Omega$ the indicator vectors of feasible structures. Given prescribed utility targets $r_{\mathrm{lb}}$ and $r_{\mathrm{ub}}$, the two problems are formulated as follows, where
$\langle \cdot,\cdot\rangle$ and $\cdot$ denote the inner and
componentwise products, respectively.

\noindent\begin{minipage}{0.45\textwidth}
{\setlength{\abovedisplayskip}{2pt}
\setlength{\abovedisplayshortskip}{2pt}
\begin{subequations} \label{eq:lb}
\begin{align}
  \min_{x\in \mathcal{X}} &~\langle c,x\rangle \\
  \text{s.t.} &
  \min_{y \in \mathcal Y_{\Omega}}
  \langle u + \hat u \cdot x, y \rangle
  \geq r_{\mathrm{lb}},
  \label{eq:lb01}
\end{align}
\end{subequations}
}
\end{minipage}
\hfill
\begin{minipage}{0.45\textwidth}
{\setlength{\abovedisplayskip}{2pt}
\setlength{\abovedisplayshortskip}{2pt}
\begin{subequations} \label{eq:ub}
\begin{align}
  \min_{x\in \mathcal{X}} &~\langle c,x\rangle \\
  \text{s.t.} &
  \max_{y \in \mathcal Y_{\Omega}}
  \langle u + \hat u \cdot x, y \rangle
  \geq r_{\mathrm{ub}}.
  \label{eq:ub01}
\end{align}
\end{subequations}
}
\end{minipage}
\vspace{6pt}

The lower-bound formulation~\eqref{eq:lb} seeks a minimum-cost enhancement plan such that the worst-case structure attains the prescribed utility target $r_{\mathrm{lb}}$. When $u$ is interpreted as the structure-selection cost incurred by a second decision maker with an objective opposing that of the first, this problem is equivalent to interdiction problems over network structures, such as paths \citep{israeli2002shortest,khachiyan2008shortpaths}, spanning trees \citep{bazgan2013criticaledges,wei2021integer}, cliques \citep{mahdavipajouh2020MinimumCostEdgeBlocker}, matchings \citep{zenklusen2010matching}, and vertex covers \citep{bazgan2011mostvitalnodes}.
The upper-bound formulation~\eqref{eq:ub}, in contrast, focuses on improving the best-case structure. Related ideas arise in network upgrade optimization. \citet{demgensky2004optimizing} study a network flow upgrade problem in which investments can either decrease arc costs or increase capacities, to maximize network flow subject to budget and flow-cost constraints. \citet{saharoy2015approximation} study a budget-constrained spanning tree problem in which edge weights can be increased at a cost, with the objective of maximizing the weight of the resulting maximum spanning tree.

Most existing studies on formulations~\eqref{eq:lb} and~\eqref{eq:ub} assume deterministic enhancement costs and edge utilities. In practice, however, these quantities may vary due to changes in material and labor costs, demand, operating conditions, or disruptions \citep{benAmmar2015commonrisk,zhang2016greensupplychain,dou2024mitigatingclosedloop}. To hedge against these uncertainties, this paper focuses on robust counterparts of the two enhancement problems, where the enhancement cost and utility vectors are assumed to belong to nonempty, compact, polyhedral uncertainty sets \(\mathcal C\) and \(\mathcal U\), respectively.
The resulting robust counterparts are given by
\vspace{6pt}
\noindent\begin{minipage}
{0.45\textwidth}
{\setlength{\abovedisplayskip}{2pt}
\setlength{\abovedisplayshortskip}{2pt}
\begin{subequations} \label{eq:rlb}
\begin{align}
  \min_{x\in \mathcal{X}}
  &\max_{c \in \mathcal C}~\langle c,x\rangle
  \label{eq:rlb01}\\
  \text{s.t.} &
  \min_{y \in \mathcal Y_{\Omega}}
  \min_{u \in \mathcal U}
  \langle u + \hat u \cdot x, y\rangle
  \geq r_{\mathrm{lb}},
  \label{eq:rlb02}
\end{align}
\end{subequations}
}
\end{minipage}
\hfill
\begin{minipage}{0.45\textwidth}
{\setlength{\abovedisplayskip}{2pt}
\setlength{\abovedisplayshortskip}{2pt}
\begin{subequations} \label{eq:rub}
\begin{align}
  \min_{x\in \mathcal{X}}
  &\max_{c \in \mathcal C}~\langle c,x\rangle \\
  \text{s.t.} &
  \max_{y \in \mathcal Y_{\Omega}}
  \min_{u \in \mathcal U}
  \langle u + \hat u \cdot x, y\rangle
  \geq r_{\mathrm{ub}}.
  \label{eq:rub02}
\end{align}
\end{subequations}
}
\end{minipage}

\noindent In both formulations, the two sources of uncertainty are modeled adversarially from the network owner's perspective: enhancement costs are overestimated, while utilities are underestimated, to hedge against potential parameter perturbations prescribed by $\mathcal C$ and $\mathcal U$.

Network interdiction and upgrade problems are often computationally challenging even in deterministic settings, and incorporating uncertainty introduces an additional optimization layer, leading to the multilevel combinatorial robust problems~\eqref{eq:rlb} and~\eqref{eq:rub}. Consequently, these formulations have received relatively limited attention despite their practical relevance. In the maximum-flow interdiction setting, \citet{chauhan2024robust} introduce a budget-constrained version of the robust lower-bound problem. Owing to the computational difficulty of the resulting robust multilevel formulation, their solution approach focuses instead on a more tractable upper-bounding problem. To the authors' best knowledge, this represents the most recent development directly related to~\eqref{eq:rlb}. A related line of work studies network interdiction under incomplete and asymmetric information \citep{azizi2024shortest,punlagreen2024shortest}. 
In these models, the uncertainty faced by the outer and inner decision makers is separated according to their asymmetric information, resulting in uncertainty structures and solution approaches that differ from those considered in this paper. 
Thus, despite related progress, exact and approximation methods for robust lower- and upper-bound enhancement problems remain limited.
This paper makes the following contributions.

\begin{itemize}
\item 
We develop exact formulations and solution methods for both robust enhancement problems. For the lower-bound problem~\eqref{eq:rlb}, we derive a cut-generation reformulation. For the upper-bound problem~\eqref{eq:rub}, we establish two exact reformulations and compare their linear relaxation strength.

\item 
For the robust lower-bound problem~\eqref{eq:rlb}, we identify the robust separation subproblem as a key computational bottleneck. Even when the underlying network problem retains an exact continuous relaxation, its linearized robust formulation can admit fractional optimal extreme points. We characterize the emergence of these fractional extreme points and demonstrate their computational implications (Proposition~\ref{prop:yfrac}).

\item 
To address these computational challenges, we develop a box approximation of the utility uncertainty set that simplifies both robust enhancement problems while preserving robustness. We establish explicit performance guarantees, showing that the approximation is no more conservative than increasing the enhancement target by a factor of $1+\lambda_{\mathcal U}$. Moreover, $\lambda_{\mathcal U}$ is bounded by the maximum relative utility perturbation, providing a dimension-independent guarantee.



\item Computational experiments on spanning tree enhancement instances show that the proposed approximation maintains consistently strong solution quality while providing substantial improvements in computational efficiency, especially in the lower-bound problem.
\end{itemize}

\subsection{Literature Review}
\label{sec:literature}
We organize the relevant literature into three streams. We begin with network interdiction and robust optimization, which provide the methodological foundations for our study, followed by robust lower- and upper-bound optimization.

\subsubsection{Interdiction and Robust Optimization}

Network interdiction is closely related to the deterministic lower-bound problem~\eqref{eq:lb}. These problems model adversarial interactions between an attacker and a defender, where the attacker first removes, disables, or degrades selected network components, and the defender subsequently optimizes the disrupted network. Interdiction has been studied across various network structures, including maximum flow \citep{wood1993deterministic}, shortest path \citep{israeli2002shortest}, multicommodity flow \citep{lim2007algorithms}, spanning trees \citep{bazgan2013criticaledges,wei2021integer}, cliques \citep{mahdavipajouh2020MinimumCostEdgeBlocker}, and matchings \citep{zenklusen2010matching}. These models typically seek either to maximize disruption under a fixed interdiction budget or to minimize the interdiction cost required to achieve a prescribed performance target \citep{wei2022integer,wei2025supervalid}. Their sequential decision structure naturally leads to bilevel or multilevel formulations, motivating solution approaches based on dualization, decomposition, constraint generation, and branch-and-cut \citep{smith2020survey,wei2022integer,wei2025supervalid}.

Robust optimization addresses parameter uncertainty by optimizing against adverse realizations within prescribed uncertainty sets \citep{ben2002robust,bertsimas2003robust}. It has been widely applied to inventory management \citep{bertsimas2006robust,Ardestani-JaafariDelage2016}, supply chain optimization \citep{STZ2019,bandi2019sustainable}, scheduling and transportation \citep{ardestani2021linearized}, and other decision problems. A central consideration is the tradeoff between robustness and computational tractability, which depends strongly on the structure of the uncertainty set. In particular, box uncertainty sets often lead to tractable reformulations, while more general uncertainty sets can introduce additional computational challenges \citep{ben2004adjustable,MarandiHertog18}. To address such challenges, several studies investigate tractable approximations and establish performance guarantees that quantify the resulting conservatism \citep{bertsimas2011geometric,bertsimas2015tight,awasthi2015adaptivity,wei2024adjustability}. 
These studies highlight the role of uncertainty-set structure in balancing computational tractability and conservatism, a central consideration in our robust network enhancement problems.

\subsubsection{Robust Lower-Bound Optimization}
Robust network interdiction is computationally challenging because uncertainty introduces an additional optimization layer into an already nested problem, often resulting in multilevel formulations. For the problem most closely related to our robust lower-bound model~\eqref{eq:rlb}, \citet{chauhan2024robust} derive a single-level mixed-integer programming formulation that provides an upper bound rather than solving the original robust bilevel formulation. They further develop Lagrangian-relaxation and Benders-decomposition heuristics to enhance solution efficiency.


Another related stream considers network interdiction under incomplete or
asymmetric information. \citet{azizi2024shortest} study shortest path interdiction in which the defender observes the interdicted arcs but faces
uncertainty about the interdiction intensity and the resulting traversal
costs. \citet{punlagreen2024shortest}, in contrast, assume that the defender
uses nominal network parameters while the attacker accounts for parameter
uncertainty. Thus, uncertainty is treated asymmetrically across the two decision levels in these models, which can lead to more tractable formulations and differs from the robust lower-bound formulation~\eqref{eq:rlb} considered in this paper.

Uncertainty in interdiction has also been studied through stochastic and
distributionally robust settings. Early stochastic formulations include
\citet{cormican1998stochastic}, who consider uncertain interdiction outcomes
and arc capacities. Subsequent stochastic interdiction models address uncertainty in network
parameters, interdiction outcomes, and asymmetric information under different
risk measures
\citep{lei2018stochastic,nguyen2022network,nguyen2024asymmetric,
pay2019stochastic}. Distributionally robust extensions further account for
uncertainty in the underlying probability distribution.
\citet{kang2023distributionally} develop risk-averse and risk-receptive
interdiction models over ambiguity sets, while
\citet{sadana2023randomized} study distributionally robust maximum-flow
interdiction with uncertain capacities, conditional value-at-risk, and
randomized interdiction strategies.

\subsubsection{Robust Upper-Bound Optimization}

Compared with the lower-bound formulation, the upper-bound enhancement problem~\eqref{eq:ub} has received considerably less attention. The closest related studies consider deterministic network upgrades. \citet{krumke1999improving} study node upgrades that reduce incident edge weights to improve the minimum spanning tree. \citet{saharoy2015approximation} investigate a budget-constrained problem that maximizes the weight of a maximum spanning tree through edge-weight upgrades. \citet{demgensky2004optimizing} consider network flow upgrades that reduce arc costs or increase capacities under a fixed budget. Unlike our formulation~\eqref{eq:rub}, these studies optimize network performance under deterministic parameters. To the best of our knowledge, robust upper-bound enhancement with uncertain costs and utilities remains largely unexplored in general network settings.



The remainder of the paper is organized as follows. Section~\ref{Lower_bound} develops an exact reformulation for the robust lower-bound problem and examines the computational difficulty of the associated robust separation problem. Section~\ref{sec:Lower_appx} proposes an approximation approach with performance guarantees. Section~\ref{section:robut_upper_bound} develops two exact reformulations for the robust upper-bound problem, compares their relaxation strength, and extends the approximation approach to this setting. Section~\ref{Section Numerical Exper} presents computational experiments evaluating the exact and approximate methods for both problems. Finally, Section~\ref{sec:conclusion} summarizes the main findings and concludes the paper.


\paragraph{Notation.}
For vectors $a$ and $b$, $\langle a,b\rangle$ denotes their inner product, and $a\cdot b$ denotes their componentwise product. Vector inequalities are interpreted componentwise, with scalar bounds applying to every component. For a polyhedron $\mathcal P$, $\operatorname{ext}(\mathcal P)$ denotes its set of extreme points. 
Given a graph $\mathcal G=(V,E)$, let $n:=|V|$ and $m:=|E|$ denote the numbers of vertices and edges, respectively. The cost and utility uncertainty sets are polytopes defined by $\mathcal C=\{c\in\mathbb R_+^m\mid Gc\le g\}$ and $\mathcal U=\{u\in\mathbb R_+^m\mid Hu\le h\}$, respectively.

\section{Robust Lower-Bound Enhancement} \label{Lower_bound}
This section studies the robust lower-bound problem~\eqref{eq:rlb}. We derive an equivalent reformulation by identifying a subset of structures that must be enhanced, leading to a cut-generation algorithm tailored to the robust setting. 
We further show that the associated robust separation subproblem can be computationally challenging even when its deterministic counterpart admits an efficient solution method.



\subsection{Reformulation via Critical Structures}

We begin by introducing the structures needed to reformulate the robust lower-bound problem~\eqref{eq:rlb}.

\begin{definition}[Critical Structure]
A structure $T\in\Omega$ is called \emph{critical} if it satisfies
$\min_{u\in\mathcal U}\langle u,y_T\rangle<r_{lb}$. The set of critical structures is denoted by $\hat{\Omega}
:=
\left\{
T\in\Omega
\;\middle|\;
\min_{u\in\mathcal U}\langle u,y_T\rangle<r_{lb}
\right\}.
$
\end{definition}

The notion of a critical structure has been employed in several interdiction problems to derive cut-generation reformulations \citep{wei2022integer,israeli2002shortest,lozano2017backward}. 
In our setting, we extend this definition to accommodate the uncertainty in utility.
Building on this definition, we establish the following reformulation for the robust lower-bound problem.
\begin{theorem}\label{Theo:Lb_master}
  Let $\hat\Omega$ denote the set of critical structures. Then formulation~\eqref{eq:rlb} is equivalent to
\setlength{\abovedisplayskip}{2pt}
\setlength{\abovedisplayshortskip}{2pt}
\setlength{\belowdisplayskip}{2pt}
\setlength{\belowdisplayshortskip}{2pt}
  \begin{subequations}
    \label{eq:rlbreform}
  \begin{align}
    \min_{\eta \geq 0, x \in \mathcal X} &~ \langle g, \eta \rangle\label{eq:rlbreform00}\\
    \text{s.t.} &~ G^\intercal \eta \geq x\label{eq:rlbreform01}\\
                &~ \langle u + \hat u \cdot x, y_T \rangle \geq r_{lb}, ~\forall T \in \hat\Omega, ~ u \in \mathcal U. \label{eq:rlbreform02}
  \end{align}
  \end{subequations}
\end{theorem}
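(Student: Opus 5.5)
The proof splits into two independent reductions: one for the objective, one for the constraint.

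\textbf{Objective.} For fixed $x\in\mathcal X$, the inner problem $\max_{c\in\mathcal C}\langle c,x\rangle$ is the linear program $\max\{\langle x,c\rangle \mid Gc\le g,\ c\ge 0\}$. Since $\mathcal C$ is nonempty and compact, this LP is feasible and bounded. Strong LP duality then gives
\[
\max_{c\in\mathcal C}\langle c,x\rangle=\min\{\langle g,\eta\rangle \mid G^\intercal\eta\ge x,\ \eta\ge 0\}.
\]
Substituting this into the outer minimization over $x$ merges the two minimizations into a single one over $(\eta,x)$. This yields objective~\eqref{eq:rlbreform00} with constraint~\eqref{eq:rlbreform01}. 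The dual is feasible because the primal is bounded, so no $x$ is lost in the substitution.

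\textbf{Constraint.} Constraint~\eqref{eq:rlb02} is equivalent to requiring, for every $T\in\Omega$ and every $u\in\mathcal U$,
\[
\langle u+\hat u\cdot x,\,y_T\rangle\ge r_{lb}.
\]
This follows simply from the fact that a minimum over $\mathcal Y_\Omega\times\mathcal U$ is at least $r_{lb}$ if and only if every term is. It remains to show that the structures in $\Omega\setminus\hat\Omega$ can be dropped. Take any non-critical $T$, so that $\min_{u\in\mathcal U}\langle u,y_T\rangle\ge r_{lb}$. Because $\hat u\ge 0$, $x\ge 0$ (as $\mathcal X\subseteq\{0,1\}^m$ or at least the nonnegative orthant), and $y_T\ge 0$, we have $\langle \hat u\cdot x,y_T\rangle\ge 0$. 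Hence for every $u\in\mathcal U$,
\[
\langle u+\hat u\cdot x,y_T\rangle\ \ge\ \langle u,y_T\rangle\ \ge\ r_{lb}.
\]
So the constraint for $T$ holds automatically for every $x$. Restricting to $T\in\hat\Omega$ therefore gives exactly~\eqref{eq:rlbreform02}.

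\textbf{Conclusion.} Combining the two reductions shows that the feasible $x$ coincide and that the objective values match. For any feasible $x$, the optimal $\eta$ attains $\max_{c}\langle c,x\rangle$, so optimal values and solutions correspond.

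The only mildly delicate step is the duality argument: one must check that the feasibility and boundedness assumptions on $\mathcal C$ deliver zero duality gap, and that minimizing jointly over $(\eta,x)$ is legitimate. Both are standard. The nonnegativity of $\hat u$ and $x$ is the key assumption for discarding non-critical structures.
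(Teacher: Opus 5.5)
Your proposal is correct and follows the same route as the paper: LP duality on the inner cost maximization gives the objective and constraint~\eqref{eq:rlbreform01}, and the robust utility constraint is enforced only on critical structures because non-critical ones satisfy it automatically. You also make explicit the step the paper leaves implicit, namely that $\langle \hat u\cdot x, y_T\rangle \ge 0$ is what justifies discarding the structures in $\Omega\setminus\hat\Omega$.
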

\begin{proof}
Constraints~\eqref{eq:rlbreform00} and~\eqref{eq:rlbreform01} are obtained from~\eqref{eq:rlb01} by fixing the decision vector $x$ and dualizing the resulting inner problem with
    $\eta$ denoting the corresponding dual variables.
Constraint~\eqref{eq:rlbreform02} is equivalent to~\eqref{eq:rlb02}, since the latter holds if and only if every critical structure is strengthened such that its worst-case utility is at least $r_{lb}$. This yields the reformulation~\eqref{eq:rlbreform}.
\end{proof}

Theorem~\ref{Theo:Lb_master} reformulates the original multilevel optimization problem~\eqref{eq:rlb} as an equivalent single-level problem with a semi-infinite constraint. We therefore employ a cut-generation framework based on a master--subproblem decomposition. At each iteration, the master problem solves a relaxation of~\eqref{eq:rlbreform} containing only constraints indexed by a finite set of \(T \in \hat{\Omega}\) and their associated worst-case \(u \in \mathcal{U}\), generated in previous iterations. Given the resulting incumbent enhancement decision \(x\), the corresponding separation subproblem searches for a violated constraint associated with a critical structure \(T \in \hat{\Omega}\). Any identified violated constraint is then added to the master problem, and the process is repeated. 
The algorithm terminates when the separation subproblem identifies no violated constraint, at which point the current master solution is feasible for the original robust problem and therefore optimal.
Consequently, the overall computational efficiency of the framework depends primarily on the tractability of the separation subproblem, which we examine in the next subsection.


\subsection{Robust Separation Subproblem}

Given a solution $ \hat x\in\mathcal{X}$ obtained from the master problem, we consider the following subproblem, which we refer to as the \emph{robust bilinear separation subproblem}:
\begin{equation}
  \label{eq:splb}
  \begin{aligned}
    \min_{y \in \mathcal Y_\Omega,\, u \in \mathcal U}
     & ~\langle u + \hat u \cdot \hat x, y \rangle.
  \end{aligned}
\end{equation}
In many network optimization settings (e.g., shortest paths, minimum spanning trees, and maximum flow), the binary solution space $\mathcal Y_\Omega$ admits a \emph{positive perfect relaxation} $\bar {\mathcal Y}_\Omega$~\citep{conforti2014perfectFormulations}. That is, for every $u \geq 0$, the problem $\min_{y\in \bar {\mathcal{Y}}_{\Omega}}\langle u, y \rangle$ possesses a binary optimal extreme point.
A natural question is whether this property is preserved in the robust bilinear counterpart~\eqref{eq:splb}. The following proposition gives a positive answer.

\begin{proposition}
  \label{prop:relax}
  Suppose $\mathcal Y_\Omega$ admits a positive perfect relaxation $\bar{\mathcal Y}_\Omega$. Then, for every $x \in \mathcal X$, 
\begin{equation*}
    \begin{aligned}
        \min_{y\in \mathcal Y_\Omega}\min_{u \in \mathcal U} ~\langle u + \hat u \cdot x, y \rangle = \min_{y\in \bar{\mathcal Y}_\Omega}\min_{u \in \mathcal U} ~\langle u + \hat u \cdot x, y \rangle.
    \end{aligned}
\end{equation*}
\end{proposition}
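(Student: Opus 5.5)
We prove the two inequalities separately. One direction is immediate. Assuming, as the definition of a relaxation requires, that $\mathcal Y_\Omega\subseteq\bar{\mathcal Y}_\Omega$, the right-hand side minimizes over a larger feasible set. Hence it is at most the left-hand side.

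For the reverse inequality, the plan is to swap the order of minimization and apply the positive perfect property pointwise in $u$. Joint minimization over $(y,u)$ can be taken in either order, so
\begin{equation*}
\min_{y\in \bar{\mathcal Y}_\Omega}\min_{u \in \mathcal U} \langle u + \hat u \cdot x, y \rangle
=\min_{u \in \mathcal U}\min_{y\in \bar{\mathcal Y}_\Omega} \langle w(u), y \rangle,
\qquad w(u):=u+\hat u\cdot x .
\end{equation*}
Here we assume $\bar{\mathcal Y}_\Omega$ is a polytope (or at least that the inner minima are attained), and $\mathcal U$ is compact, so all minima exist. Fix any $u\in\mathcal U$. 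Since $u\ge 0$, $\hat u\ge 0$ and $x\ge 0$ (as $\mathcal X$ consists of enhancement indicators), we have $w(u)\ge 0$. The positive perfect relaxation property then gives a binary optimal point $y^u\in\mathcal Y_\Omega$ of $\min_{y\in\bar{\mathcal Y}_\Omega}\langle w(u),y\rangle$. Therefore
\begin{equation*}
\min_{y\in \bar{\mathcal Y}_\Omega}\langle w(u),y\rangle=\langle w(u),y^u\rangle\ \ge\ \min_{y\in\mathcal Y_\Omega}\langle w(u),y\rangle .
\end{equation*}
Taking the minimum over $u\in\mathcal U$ and swapping the order back on the discrete side yields the left-hand side. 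This establishes the reverse inequality.

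The main point needing care is the nonnegativity of the effective weight $w(u)$, since the perfect relaxation property is only assumed for nonnegative objectives. This rests on $\mathcal U\subseteq\mathbb R^m_+$ (given in the notation) and on $\hat u\ge 0$, $x\ge 0$, and I would state these explicitly. A second subtlety is that the proposition concerns optimal values, not extreme points. The bilinear problem over $\bar{\mathcal Y}_\Omega\times\mathcal U$ may still have fractional optimal solutions; only the value is preserved. This is consistent with the later discussion of fractional extreme points in the linearized formulation.
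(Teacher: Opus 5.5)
Your proof is correct and follows the paper's argument. The paper likewise uses nonnegativity of $u+\hat u\cdot x$ to obtain the pointwise equality $\min_{y\in\mathcal Y_\Omega}\langle u+\hat u\cdot x,y\rangle=\min_{y\in\bar{\mathcal Y}_\Omega}\langle u+\hat u\cdot x,y\rangle$ for each $u\in\mathcal U$ and then minimizes over $u$, interchanging the two minima tacitly where you do so explicitly; your two-inequality split additionally spells out assumptions the paper leaves implicit, namely $\mathcal Y_\Omega\subseteq\bar{\mathcal Y}_\Omega$, that binary points of $\bar{\mathcal Y}_\Omega$ lie in $\mathcal Y_\Omega$ (which your claim $y^u\in\mathcal Y_\Omega$ relies on), and $\hat u,x\ge 0$.
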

\begin{proof}
Fix any $x\in\mathcal X$. For every $u\in\mathcal U$, the vector $u+\hat u\cdot x$ is nonnegative. Since $\bar{\mathcal Y}_\Omega$ is a positive perfect relaxation of $\mathcal Y_\Omega$, it follows that
\[
\min_{y\in\mathcal Y_\Omega}\langle u+\hat u\cdot x,y\rangle
=
\min_{y\in\bar{\mathcal Y}_\Omega}\langle u+\hat u\cdot x,y\rangle,
\qquad \forall u\in\mathcal U.
\]
This implies that minimizing over $u\in\mathcal U$ on both sides preserves the equality.
\end{proof}

Although Proposition~\ref{prop:relax} allows us to relax $\mathcal Y_\Omega$ in the subproblem, the resulting problem remains challenging due to the bilinear term in the objective. The following proposition reformulates~\eqref{eq:splb} as a mixed-integer program (MIP). We omit the proof, as the result follows from a standard MIP reformulation.

\begin{proposition}
  The problem \eqref{eq:splb} can be equivalently reformulated as the following \emph{linearized MIP separation subproblem}
 \setlength{\abovedisplayskip}{2pt}
\setlength{\abovedisplayshortskip}{2pt}
\setlength{\belowdisplayskip}{2pt}
\setlength{\belowdisplayshortskip}{2pt}
  \begin{subequations}
    \label{eq:spmip}
  \begin{align}
    \min_{y \in \mathcal Y_\Omega, u \in \mathcal U, z \geq 0} &~ \langle 1, z \rangle \label{eq:spmip00}\\
    \text{s.t.} &~ z \geq (u + \hat u \cdot \hat x) - (\bar u + \hat u\cdot \hat x)\cdot (1- y),\label{eq:spmip01}
  \end{align}
  \end{subequations}
where $\bar u_e = \max_{u \in \mathcal U} u_e$ is the entry-wise upper bound.
\end{proposition}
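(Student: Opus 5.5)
We prove the equivalence by showing that, for every fixed pair $(y,u)\in\mathcal Y_\Omega\times\mathcal U$, the inner minimization over $z$ in \eqref{eq:spmip} returns exactly the bilinear objective $\langle u+\hat u\cdot\hat x, y\rangle$. Minimizing both problems over the common feasible set $\mathcal Y_\Omega\times\mathcal U$ then gives equal optimal values. It also shows that optimal $(y,u)$ pairs coincide, with $z$ recovered as the componentwise product. Since $\mathcal Y_\Omega$ is binary, the argument works edge by edge, splitting into the cases $y_e=1$ and $y_e=0$.

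Fix $(y,u)$ and write $w_e := u_e+\hat u_e\hat x_e\ge 0$ and $M_e:=\bar u_e+\hat u_e\hat x_e$. Because $u\in\mathcal U$ and $\bar u_e=\max_{u\in\mathcal U}u_e$, we have $w_e\le M_e$. The constraints \eqref{eq:spmip01} together with $z\ge 0$ separate across edges: $z_e\ge \max\{0,\, w_e - M_e(1-y_e)\}$. Each $z_e$ carries coefficient $1$ in \eqref{eq:spmip00}, so the minimizing $z_e$ equals this lower bound.
\begin{itemize}
\item If $y_e=1$, the bound is $\max\{0,w_e\}=w_e=w_ey_e$.
\item If $y_e=0$, the bound is $\max\{0,w_e-M_e\}=0=w_ey_e$, using $w_e\le M_e$.
\end{itemize}
Hence the optimal $z$ satisfies $z = w\cdot y$. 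Summing over edges, $\min_z\langle 1,z\rangle=\langle u+\hat u\cdot\hat x,y\rangle$.

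Finally, the joint minimization over $(y,u,z)$ equals the minimization over $(y,u)$ of the partial minimum over $z$. By the previous step this is the objective of \eqref{eq:splb}. The only point needing care is the validity of the big-$M$ coefficient: $\bar u$ must be finite and must dominate $u_e$ for every $u\in\mathcal U$. Both hold because $\mathcal U$ is a nonempty compact polytope, so each $\bar u_e$ is attained. With that verified, the remaining steps are routine.
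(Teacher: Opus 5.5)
Your proof is correct and is exactly the standard big-$M$ linearization argument the paper invokes when it omits the proof: for fixed $(y,u)$ the minimal $z$ is $(u+\hat u\cdot\hat x)\cdot y$, because $0\le u_e+\hat u_e\hat x_e\le \bar u_e+\hat u_e\hat x_e$. The one step worth justifying explicitly is the nonnegativity $w_e\ge 0$ used in the case $y_e=1$, which follows from $\mathcal U\subseteq\mathbb R_+^m$, $\hat u\ge 0$ and $\hat x\ge 0$.
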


Following this linearization, 
it remains to determine whether the relaxation result established in Proposition~\ref{prop:relax} extends to the linearized MIP separation subproblem~\eqref{eq:spmip}.
The following proposition and subsequent example answer this question in the negative. The proposition provides an exact characterization of when the positive perfect relaxation may fail in this linearized formulation, while the subsequent example provides an explicit construction demonstrating this failure.


\begin{proposition}\label{prop:yfrac}
  Suppose $\bar u_e > 0$ for all $e \in E$, and let $\Pi$ be the relaxed feasible region of \eqref{eq:spmip} defined as $\Pi:=\{y \in \bar{\mathcal Y}_\Omega, u \in \mathcal U, z \geq 0 \mid (y, u, z) \text{ satisfies}~\eqref{eq:spmip01} \}$. 
  Let $y' \in \bar{\mathcal Y}_\Omega \setminus \operatorname{ext}(\bar{\mathcal Y}_\Omega)$,
 $u'\in\ext(\mathcal U)$,
and $z'_e := \max\{0, u'_e + \hat u_e x_e - (\bar u_e + \hat u_e x_e)(1-y'_e)\}$.
Define
\[
K(y',u')
:=
\left\{
e\in E\;\middle|\;
u'_e+\hat u_e x_e
=
(\bar u_e+\hat u_e x_e)(1-y'_e)
\right\},
\]
and let $D(y')$ denote the set of all feasible moving directions at $y'$, i.e., 
\[
D(y') := \left\{ d \in \mathbb{R}^{E} \,\middle|\, y' \pm \lambda d \in \bar{\mathcal Y}_\Omega \text{ for some } \lambda > 0 \right\}.
\]
Then,
\(
(y',u',z')\in\ext(\Pi)
\)
if and only if, for every nonzero $d\in D(y')$, there exists
$e\in K(y',u')$ such that $d_e\neq 0$.
\end{proposition}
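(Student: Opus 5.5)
We prove both directions using the standard characterization that a point of a polyhedron is extreme if and only if there is no nonzero direction $(d,\delta,\zeta)$ with $(y',u',z')\pm\lambda(d,\delta,\zeta)\in\Pi$ for some $\lambda>0$. Throughout, write $a_e:=u'_e+\hat u_e x_e$ and $b_e:=\bar u_e+\hat u_e x_e>0$. Constraint~\eqref{eq:spmip01} then reads $z_e\ge u_e+\hat u_e x_e - b_e(1-y_e)$, and $z'_e=\max\{0,\,a_e-b_e(1-y'_e)\}$. Partition $E$ into three index sets: $K:=K(y',u')$, where $a_e=b_e(1-y'_e)$ and both lower bounds on $z_e$ equal $0$; $P$, where $a_e-b_e(1-y'_e)>0$, so $z'_e>0$ and only~\eqref{eq:spmip01} is tight; and $N$, where $a_e-b_e(1-y'_e)<0$, so $z'_e=0$ and only $z_e\ge 0$ is tight.

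\emph{Sufficiency (the condition implies extremality).} Suppose $(y',u',z')\pm\lambda(d,\delta,\zeta)\in\Pi$. Since $u'\in\ext(\mathcal U)$, the $u$-component forces $\delta=0$. Since $y'\pm\lambda d\in\bar{\mathcal Y}_\Omega$, we have $d\in D(y')$.
\begin{itemize}
\item For $e\in N$, the tight constraint $z_e\ge0$ in both signs gives $\zeta_e=0$.
\item For $e\in K$, both $z_e\ge 0$ and $z_e\ge a_e-b_e(1-y_e)$ are tight. Applying them in both signs gives $\zeta_e=0$ and $\zeta_e=b_e d_e$, so $d_e=0$ because $b_e>0$.
\end{itemize}
Hence $d$ vanishes on $K$. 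By the hypothesis, every nonzero $d\in D(y')$ has some $d_e\neq0$ with $e\in K$, so $d=0$. Finally, for $e\in P$, the tight constraint~\eqref{eq:spmip01} gives $\zeta_e=b_e d_e=0$. Thus the direction is zero and the point is extreme.

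\emph{Necessity (extremality implies the condition).} We argue by contraposition. Suppose some nonzero $d\in D(y')$ has $d_e=0$ for all $e\in K$. Set $\delta=0$, $\zeta_e=b_e d_e$ for $e\in P$, and $\zeta_e=0$ otherwise. We check that $(y',u',z')\pm\lambda'(d,0,\zeta)\in\Pi$ for sufficiently small $\lambda'>0$:
\begin{itemize}
\item On $K$ the point is unchanged, since $d_e=0$ and $\zeta_e=0$.
\item On $P$, constraint~\eqref{eq:spmip01} stays tight, and $z_e>0$ survives small perturbations.
\item On $N$, $z_e=0$ while the right-hand side of~\eqref{eq:spmip01} is strictly negative, so small perturbations of $y_e$ keep it negative.
\item $y'\pm\lambda'd\in\bar{\mathcal Y}_\Omega$ by the definition of $D(y')$, after shrinking $\lambda'$ if necessary.
\end{itemize}
The point is therefore the midpoint of two distinct points of $\Pi$ and is not extreme.

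The hypothesis $y'\notin\ext(\bar{\mathcal Y}_\Omega)$ only ensures that nonzero directions exist, so the statement is not vacuous. The main care is needed at two places. One is the elementwise bookkeeping on $K$, where the two tight constraints together force $d_e=0$; this uses $\bar u_e>0$, hence $b_e>0$. The other is checking that a single $\lambda'$ works simultaneously for all strict inequalities. This is routine because $E$ is finite.
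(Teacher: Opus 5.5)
Your proof is correct and is essentially the paper's argument. For necessity, your perturbation ($\zeta_e=b_ed_e$ on $P$, $\zeta_e=0$ elsewhere) coincides with the paper's choice $z^\pm_e=\max\{0,\,u'_e+\hat u_ex_e-b_e(1-y'_e\mp\lambda d_e)\}$. For sufficiency, you use $u'\in\ext(\mathcal U)$ and the two tight constraints on $K$ to force $d_e=0$, just as the paper does; you only phrase this with a feasible direction rather than a midpoint decomposition, and use tightness on $P$ in place of the paper's observation that $z'$ is the componentwise minimal feasible $z$.
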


\begin{proof}
  Let
\(
g_e(y):=
u'_e+\hat u_e x_e
-(\bar u_e+\hat u_e x_e)(1-y_e),
\)
so that $z'_e=\max\{0,g_e(y')\}$. Since $\bar u_e+\hat u_e x_e>0$, 
$g_e(y)$ is affine and strictly increasing in $y_e$.
For necessity, suppose $(y',u',z')\in\ext(\Pi)$ and, to the contrary, there exists a nonzero $d\in D(y')$ such that $d_e=0$ for all $e\in K(y',u')$. For sufficiently small $\lambda>0$, let
\[
y^\pm:=y'\pm\lambda d\in\bar{\mathcal Y}_\Omega,
\qquad
z^\pm_e:=\max\{0,g_e(y^\pm)\}.
\]
For $e\in K(y',u')$, we have $d_e=0$, and hence
$z^\pm_e=z'_e=0$. For $e\notin K(y',u')$, $g_e(y')\neq0$; thus,
for sufficiently small $\lambda$, $g_e(y^+)$ and $g_e(y^-)$ have the
same sign as $g_e(y')$. Therefore,
\(
(z^+_e+z^-_e)/2=z'_e
\).
Hence
\(
(y',u',z')
=
0.5 (y^+,u',z^+)
+
0.5 (y^-,u',z^-),
\)
where the two points are distinct and belong to $\Pi$, contradicting
$(y',u',z')\in\ext(\Pi)$.

For sufficiency, suppose every nonzero $d\in D(y')$ has
$d_e\neq0$ for some $e\in K(y',u')$. Assume
\(
(y',u',z')
=
0.5(y^1,u^1,z^1)
+
0.5(y^2,u^2,z^2)
\)
for two points in $\Pi$. Since $u'\in\ext(\mathcal U)$,
we must have $u^1=u^2=u'$. Let $d:=y^1-y'=y'-y^2$, so $d\in D(y')$.
For every $e\in K(y',u')$, $z'_e=0$ by definition. Since $z^1_e,z^2_e\ge0$ and
$(z^1_e+z^2_e)/2=z'_e$, it follows that $z^1_e=z^2_e=0$, which implies
\(
g_e(y^1)\le0\) and \(g_e(y^2)\le0
\).
Since $g_e(y')=0$ and $y'=(y^1+y^2)/2$, both inequalities must hold
with equality; that is $g_e(y^1) = g_e(y^2) = 0$. As $\bar u_e+\hat u_e x_e>0$, this implies
$d_e=0$ for every $e\in K(y',u')$. By assumption, this is possible
only if $d=0$, so $y^1=y^2=y'$.
Finally, since $z'$ is the componentwise minimum feasible $z$ for
$(y',u')$, the equality $z'=(z^1+z^2)/2$ implies
$z^1=z^2=z'$. Thus the two points coincide with $(y',u',z')$, proving
that $(y',u',z')\in\ext(\Pi)$.
\end{proof}

This proposition shows that, when $u \in \ext(\mathcal U)$, the additional extreme points introduced by the linearized formulation are driven by the big-$M$ terms $\bar u_e+\hat u_e x_e$ through the set $K(y,u)$. More generally, fractional extreme points may also arise from the joint interaction between $y$ and a non-extreme $u$. The following example demonstrates that the continuous relaxation of~\eqref{eq:spmip} can indeed admit a fractional optimal extreme point.

\begin{example}
Consider the three-edge triangle network, where
the spanning-tree polytope $\bar{\mathcal Y}_\Omega$ and the utility uncertainty $\mathcal U$ are defined as
\[
\bar{\mathcal Y}_\Omega
=\{y\in[0,1]^3\mid \langle 1,y\rangle=2\}, \quad 
\mathcal U
=\left\{u\in[2,10]^3\;\middle|\;
\langle 1,10-u\rangle\le20\right\}.
\]
Setting $x=0$ and $\hat u_e>0$ for all $e\in E$,
the relaxed linearized separation problem becomes
\[
\min_{y\in\bar{\mathcal Y}_\Omega,\,
u\in\mathcal U,\,z\ge0}
\left\{\langle 1,z\rangle
\mid z\ge u-10+10y\right\}.
\]
The point $y^*=(4/5,4/5,2/5)$, $u^*=(2,2,6)$, $z^*=0$ is feasible and attains the minimum objective value $0$. Moreover, $u^*\in\operatorname{ext}(\mathcal U)$ and $K(y^*,u^*)=E$. Since $y^*$ is fractional, Proposition~\ref{prop:yfrac} implies that $(y^*,u^*,z^*)\in\operatorname{ext}(\Pi)$. In contrast, the integer separation problem has optimal value $4$. Hence, the linearized separation problem admits a fractional optimal extreme point even when the underlying network problem has a perfect relaxation.
\hfill$\triangle$
\end{example}

Consequently, the separation subproblem for the robust lower-bound formulation remains computationally challenging, even when its deterministic counterpart admits a positive perfect relaxation. The main difficulty lies in repeatedly solving either the bilinear program~\eqref{eq:splb} or the MIP~\eqref{eq:spmip} to identify violated constraints associated with critical structures in $\hat{\Omega}$. To address these difficulties, we develop an approximation framework in the next section.

\section{Approximation for the Robust Lower-Bound Enhancement}
\label{sec:Lower_appx}


We begin by approximating the uncertainty set $\mathcal U$ with a box set that preserves tractability while providing explicit performance guarantees. The following definition formalizes this approximation.

\begin{definition}[Box Set Approximation]
 Given the utility uncertainty set $\mathcal U$, its box set approximation is defined as
 $\hat {\mathcal U}:=\{ u \in\mathbb{R}_+^m \mid \underline u \leq u \leq \bar u\}$, where $\underline u_e:=\min_{u \in \mathcal U}u_e$ and $\bar u_e:=\max_{u \in \mathcal U}u_e$
 denote the entry-wise lower and upper bounds, respectively. The box set approximation of any problem involving utility uncertainty is obtained by replacing $\mathcal U$ with $\hat{\mathcal U}$.
\end{definition}

This approximation scheme leads to a more tractable separation subproblem. In particular, replacing the utility uncertainty set $\mathcal U$ in~\eqref{eq:splb} with its box approximation yields
\begin{equation}
\label{eq:LB-Box}
\begin{aligned}
\min_{y \in \mathcal Y_\Omega}
\langle \underline u + \hat u \cdot x, y \rangle,
\end{aligned}
\end{equation}
which is a deterministic optimization problem.
The approximation improves computational tractability while preserving robustness. Specifically, it eliminates the inner optimization over the uncertainty set and retains the positive perfect relaxation property, when present, while providing a conservative estimate of the worst-case utility associated with each structure.


To address concerns regarding the conservatism introduced by the box set approximation, the rest of this subsection establishes interpretable bounds on the approximation quality.

\begin{lemma}
  \label{lem:appx}
  Suppose $m\ge2$ and $\mathcal U$ is convex, full-dimensional, and satisfies
  $\underline u>0$. Define
  \(
  \alpha_e:=\underline u_e / (\bar u_e-\underline u_e),
  \)
  and let
  $\alpha_{(1)}\leq\alpha_{(2)}\leq\cdots\leq\alpha_{(m)}$
  denote these values in nondecreasing order. Define
  \[
  w_{\mathcal U}(T):=
  \min_{u\in\mathcal U}\iprod{u+\hat u\cdot x,y_T}
  \]
  for any feasible $x$.
  Then, for every $T\in\Omega$,
  \[
  0\leq
  \left(
  1+
  \max_{k=2,\ldots,m}
  \frac{k-1}{\sum_{i\in [k]}\alpha_{(i)}}
  \right)^{-1}
  \leq
  \frac{w_{\hat{\mathcal U}}(T)}{w_{\mathcal U}(T)}
  \leq 1.
  \]
\end{lemma}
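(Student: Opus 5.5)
The plan is to compare both quantities through explicit points of $\mathcal U$. I would first dispose of the trivial parts. Since $\mathcal U\subseteq\hat{\mathcal U}$, we have $w_{\hat{\mathcal U}}(T)\le w_{\mathcal U}(T)$, which gives the upper bound $1$. Nonnegativity of all terms gives the lower bound $0$. Full-dimensionality ensures $\bar u_e>\underline u_e$, so every $\alpha_e$ is well defined and positive.

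Next I would remove $x$. Because $\min_{u\in\hat{\mathcal U}}\iprod{u,y_T}=\iprod{\underline u,y_T}$, both $w$'s share the additive constant $a:=\iprod{\hat u\cdot x,y_T}\ge0$. Write $A:=\iprod{\underline u,y_T}$ and $B:=\min_{u\in\mathcal U}\iprod{u,y_T}$, so that $0<A\le B$. Then $(A+a)/(B+a)\ge A/B$, and it suffices to prove $B\le(1+\lambda)A$ with $\lambda:=\max_{k\ge2}(k-1)/\sum_{i\in[k]}\alpha_{(i)}$.

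The core step is a convex-combination construction. Let $S$ be the support of $y_T$. For each $e\in S$ pick $u^e\in\mathcal U$ with $u^e_e=\underline u_e$. Each such $u^e$ satisfies $u^e_f\le\bar u_f$ for all $f$. For weights $\theta\in\Delta_S$, the point $v=\sum_{e\in S}\theta_e u^e$ lies in $\mathcal U$ by convexity. Writing $\delta_f:=\bar u_f-\underline u_f$, it satisfies
\[
B\le\iprod{v,y_T}\le\sum_{f\in S}\bigl(\theta_f\underline u_f+(1-\theta_f)\bar u_f\bigr)=A+\sum_{f\in S}(1-\theta_f)\delta_f .
\]
Using $\underline u_f=\alpha_f\delta_f$, it remains to choose $\theta$ so that $(1-\theta_f)\delta_f\le t\,\underline u_f=t\alpha_f\delta_f$ for all $f\in S$, with $t\le\lambda$. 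If $|S|=1$ we get $B=A$ directly.

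Otherwise, the natural choice is $1-\theta_f=t\alpha_f$ with $t=(|S|-1)/\sum_{f\in S}\alpha_f$, which makes the weights sum to $1$. This choice can violate $\theta_f\ge0$ when some $\alpha_f>1/t$, and handling that is the main obstacle. I would use a water-filling argument. Sort $S$ by $\alpha$, and take the smallest $k'$ such that the first $k'$ edges $S'$ with $t'=(k'-1)/\sum_{f\in S'}\alpha_f$ satisfy $t'\alpha_f\le1$ on $S'$ and $t'\alpha_f\ge1$ off $S'$. On $S'$ set $\theta_f=1-t'\alpha_f$, and set $\theta_f=0$ elsewhere. For $f\notin S'$ the bound $(1-\theta_f)\delta_f=\delta_f\le t'\alpha_f\delta_f$ then holds automatically. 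The existence of such a threshold $k'$ follows from the usual monotonicity of the map $k\mapsto(k-1)/\sum_{i\le k}\alpha_{(i)}$ across the crossing point, which I would verify directly.

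Finally, the $k'$ smallest $\alpha$'s within $S$ dominate the $k'$ globally smallest ones entrywise, so $t'\le(k'-1)/\sum_{i\in[k']}\alpha_{(i)}\le\lambda$. The degenerate case $k'=1$ gives $t'=0$. This yields $B\le(1+\lambda)A$ and hence the claimed lower bound on $w_{\hat{\mathcal U}}(T)/w_{\mathcal U}(T)$.
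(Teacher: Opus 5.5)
Your proof is correct, and its key step differs from the paper's. The paper uses a single point. With $b_e=(\bar u_e-\underline u_e)y_{T,e}$ and $e^*\in\arg\max_e b_e$, the point $u^{e^*}$ gives $w_{\mathcal U}(T)-w_{\hat{\mathcal U}}(T)\le\sum_e b_e-\max_e b_e$. The paper then bounds the ratio of this slack to $\langle\underline u,y_T\rangle=\sum_e\alpha_e b_e$ by maximizing the linear-fractional function $(\sum_e q_e-1)/\sum_e\alpha_e q_e$ over $\{q\in[0,1]^E:\max_e q_e=1\}$. That maximum sits at a binary vector, and a sorting step finishes. You instead mix the $u^e$ with water-filling weights so that the slack is dominated coordinatewise, $(1-\theta_f)\delta_f\le t'\alpha_f\delta_f$. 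This is an explicit certificate whose threshold does not depend on $\delta$, and it replaces the vertex argument over a nonconvex set.

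Mixing buys no tightness, though. Since $\sum_{f\in S}(1-\theta_f)\delta_f=\sum_{f\in S}\delta_f-\sum_{f\in S}\theta_f\delta_f$ is minimized over $\Delta_S$ at the unit vector of $e^*$, the paper's single-point bound is never weaker. It also needs no convexity of $\mathcal U$. Neither does yours, since $B\le\min_{e\in S}\langle u^e,y_T\rangle\le\sum_{e\in S}\theta_e\langle u^e,y_T\rangle$.

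The threshold existence you defer is true. Sort the values $\alpha_f$, $f\in S$, as $\alpha_{[1]}\le\cdots\le\alpha_{[|S|]}$ and set $t_k:=(k-1)/\sum_{i\le k}\alpha_{[i]}$. Then $t_{k+1}\ge t_k$ iff $t_k\alpha_{[k+1]}\le 1$, and once this fails it keeps failing, so $t'=\max_k t_k$. In particular $k'\ge 2$ whenever $|S|\ge 2$, so your ``degenerate case $k'=1$'' never arises there.

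You can bypass the water-filling entirely by taking $t=\lambda$. Let $P:=\{f\in S:\lambda\alpha_f<1\}$. The definition of $\lambda$ with $k=|P|$ (trivially if $|P|\le 1$) gives $\lambda\sum_{f\in P}\alpha_f\ge |P|-1$, that is, $\sum_{f\in S}\max\{0,1-\lambda\alpha_f\}\le 1$. Hence some $\theta\in\Delta_S$ with $\theta_f\ge 1-\lambda\alpha_f$ for all $f\in S$ exists, and the coordinatewise bound holds directly with $t=\lambda$.
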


\begin{proof}
Since $\mathcal U\subseteq\hat{\mathcal U}\subseteq\mathbb R_+^m$, we have
\(
0\leq
w_{\hat{\mathcal U}}(T)/{w_{\mathcal U}(T)}
\leq 1.
\)
Moreover, under the box approximation $\hat{\mathcal U}$, $\underline u$
is optimal for every fixed feasible $x$ and $y_T$, and hence
\[
w_{\hat{\mathcal U}}(T)
=
\iprod{\underline u+\hat u\cdot x,y_T}.
\]
Since $\hat{\mathcal U}$ is the tightest axis-aligned box containing
$\mathcal U$, for every $e\in E$ there exists a point
$u^e\in\mathcal U$ such that $u^e_e=\underline u_e$.
Because $u^e_j\leq\bar u_j$ for every $j\neq e$,
\[
w_{\mathcal U}(T)
\leq
\iprod{u^e+\hat u\cdot x,y_T}
=
w_{\hat{\mathcal U}}(T)+\iprod{u^e-\underline u,y_T}
\leq
w_{\hat{\mathcal U}}(T)
+\iprod{\bar u-\underline u,y_T}
-(\bar u_e-\underline u_e)y_{T,e}.
\]
Let $b_e:=(\bar u_e-\underline u_e)y_{T,e}\geq0$. Taking the best bound over $e\in E$ gives
\[
w_{\mathcal U}(T)
\leq
w_{\hat{\mathcal U}}(T)
+
\sum_{e \in E} b_e
-
\max_{e\in E} b_e.
\]
Let $B := \max_{e \in E} b_e$. If $B=0$, then the above inequality reduces to $w_{\mathcal U}(T) \leq w_{\hat{\mathcal U}}(T)$. Thus, the desired lower bound follows immediately. Otherwise, let $q_e:= b_e / B$. Then,
\[
\sum_{e\in E} b_e-\max_{e\in E} b_e
= B\left(\sum_{e \in E} q_e - 1\right) =\left(\frac{\sum_{e \in E} q_e - 1}{\sum_{e \in E} \alpha_e q_e}\right)\sum_{e \in E} \alpha_e b_e.
\]
The term in the brackets is a linear-fractional function over the normalized set
$\{q\in[0,1]^E \mid \max_{e\in E} q_e = 1\}$, and its maximum is attained at a nonzero binary vector.
For a binary vector with exactly $k$ nonzero entries,
$$
\left(\frac{\sum_{e \in E} q_e - 1}{\sum_{e \in E} \alpha_e q_e}\right)
=
\frac{k-1}{\sum_{e:q_e=1}\alpha_e}
\leq
\frac{k-1}{\sum_{i \in [k]} \alpha_{(i)}},
$$
where the inequality follows from the ordering of $\alpha_{(i)}$. Therefore,
\[
\sum_{e\in E} b_e-\max_{e\in E} b_e
\leq  \left(\max_{k=2,\dots,m} \frac{k-1}{\sum_{i\in [k]} \alpha_{(i)}}\right)\sum_{e \in E} \alpha_e b_e
= \left(\max_{k=2,\dots,m} \frac{k-1}{\sum_{i\in [k]} \alpha_{(i)}}\right)\iprod{\underline u, y_T}.
\]
Then, we have
$$
\begin{aligned}
  \frac{w_{\hat{\mathcal U}}(T)}{w_{{\mathcal U}}(T)} &\geq \frac{\iprod{\underline u + \hat u \cdot x, y_T}}{\iprod{\underline u + \hat u \cdot x, y_T} + \left(\max_{k=2,\dots,m} \frac{k-1}{\sum_{i\in [k]} \alpha_{(i)}}\right) \iprod{\underline u, y_T}}\\
                                                      &= \frac{1}{1 + \left(\max_{k=2,\dots,m} \frac{k-1}{\sum_{i \in [k]} \alpha_{(i)}}\right) \frac{\iprod{\underline u, y_T}}{\iprod{\underline u + \hat u \cdot x, y_T}}}\\
                                                      &\geq \left(1 + \max_{k=2,\dots, m} \frac{k-1}{\sum_{i \in [k]}\alpha_{(i)}}\right)^{-1},
\end{aligned}
$$
where the last inequality follows from $\iprod{\hat u\cdot x,y_T}\geq0$.
This proves the desired lower bound.
\end{proof}

The quantity $\lambda_{\mathcal U}:=\max_{k=2,\ldots,m}(k-1)/\sum_{i=1}^k \alpha_{(i)}$, where $\alpha_e:=\underline u_e/(\bar u_e-\underline u_e)$, admits an intuitive interpretation in terms of relative uncertainty. Since $(\bar u_e-\underline u_e)/\underline u_e=1/\alpha_e$ measures the relative variation of coordinate $e$, a smaller $\alpha_e$ corresponds to greater uncertainty. The maximization over $k$ identifies the most adverse subset of coordinates whose uncertainties can act jointly. In the homogeneous case, where $\underline u=u_0 1$ and $\bar u=u_1 1$ for some scalars $u_1>u_0>0$, we have $\alpha_e=u_0/(u_1-u_0)$ for all $e$, and hence $\lambda_{\mathcal U}=(m-1) (u_1-u_0)/mu_0$. Thus, the approximation error bound increases with the relative variation in $u$ and, in the homogeneous case, grows with the problem dimension, approaching the uniform bound $(u_1-u_0)/u_0$ as $m\to\infty$.

\begin{proposition}\label{prop:error_order}
Suppose the assumptions of Lemma~\ref{lem:appx} hold.
  The approximation error for the subproblems in \eqref{eq:rlb02} and \eqref{eq:rub02} is of order $O(\lambda_{\mathcal U})$, and satisfies
  \[
  1-\frac{w_{\hat{\mathcal U}}(T)}{w_{\mathcal U}(T)}
  \leq
  \lambda_{\mathcal U}
  \leq
  \frac{m-1}{m}\alpha_{(1)}^{-1}.
  \]
\end{proposition}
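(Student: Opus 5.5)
The plan has two independent parts, one for each inequality in the displayed chain, and then a remark on the $O(\lambda_{\mathcal U})$ claim.

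\emph{First inequality.} This will follow directly from Lemma~\ref{lem:appx}. Write $\rho:=w_{\hat{\mathcal U}}(T)/w_{\mathcal U}(T)$. The lemma gives $\rho\ge (1+\lambda_{\mathcal U})^{-1}$, and therefore
\[
1-\rho\le 1-\frac{1}{1+\lambda_{\mathcal U}}=\frac{\lambda_{\mathcal U}}{1+\lambda_{\mathcal U}}\le\lambda_{\mathcal U},
\]
using $\lambda_{\mathcal U}\ge 0$. This holds for every $T\in\Omega$ and every feasible $x$. It therefore covers both the lower-bound constraint~\eqref{eq:rlb02} and the upper-bound constraint~\eqref{eq:rub02}, since both are built from the same per-structure quantity $w_{\mathcal U}(T)$. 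For~\eqref{eq:rub02}, I would add one line. The max over $T$ of $w_{\hat{\mathcal U}}(T)$ is at least $(1+\lambda_{\mathcal U})^{-1}$ times the max over $T$ of $w_{\mathcal U}(T)$, so the same relative-error bound transfers to the optimal values. The same argument works for the min in~\eqref{eq:rlb02}. So the stated relative error is $O(\lambda_{\mathcal U})$.

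\emph{Second inequality.} Here I need a uniform bound on $(k-1)/\sum_{i\in[k]}\alpha_{(i)}$ for each $k=2,\dots,m$. Two natural bounds are available:
\begin{itemize}
\item the crude bound $\sum_{i\le k}\alpha_{(i)}\ge k\alpha_{(1)}$, which gives $(k-1)/(k\alpha_{(1)})$;
\item the function $(k-1)/k$ is increasing in $k$, so this is at most $(m-1)/(m\alpha_{(1)})$.
\end{itemize}
Taking the maximum over $k$ yields $\lambda_{\mathcal U}\le \frac{m-1}{m}\alpha_{(1)}^{-1}$. The assumption $\underline u>0$ guarantees $\alpha_{(1)}>0$. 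Full-dimensionality gives $\bar u_e>\underline u_e$, so each $\alpha_e$ is finite and positive.

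\emph{Main obstacle.} There is essentially no obstacle; the only subtle point is the interpretation of ``approximation error for the subproblems''. I would state explicitly that the per-structure bound passes to the min and max over structures, because the ratio bound is uniform in $T$. I would also note the dimension-independent consequence $\lambda_{\mathcal U}<\alpha_{(1)}^{-1}=\max_e(\bar u_e-\underline u_e)/\underline u_e$.
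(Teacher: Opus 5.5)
Your proposal is correct and follows the same approach as the paper. You obtain the first inequality from Lemma~\ref{lem:appx} via $1-(1+\lambda_{\mathcal U})^{-1}=\lambda_{\mathcal U}/(1+\lambda_{\mathcal U})\le\lambda_{\mathcal U}$, and the second from $\sum_{i\in[k]}\alpha_{(i)}\ge k\alpha_{(1)}$ with $(k-1)/k$ maximized at $k=m$. Your extra remark that the uniform-in-$T$ ratio bound passes to the min and max over structures is valid, and makes explicit something the paper leaves implicit.
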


\begin{proof}
  By Lemma~\ref{lem:appx},
  \[
  1-\frac{w_{\hat{\mathcal U}}(T)}{w_{\mathcal U}(T)}
  \leq
  \frac{\lambda_{\mathcal U}}{1+\lambda_{\mathcal U}}
  \leq
  \lambda_{\mathcal U},
  \]
  so the approximation error is of order $O(\lambda_{\mathcal U})$. Moreover, since $\alpha_{(1)}\leq\alpha_{(i)}$ for every $i\in[m]$,
  \[
  \lambda_{\mathcal U}
  =
  \max_{k=2,\dots,m}
  \frac{k-1}{\sum_{i\in[k]}\alpha_{(i)}}
  \leq
  \max_{k=2,\dots,m}
  \frac{k-1}{k\alpha_{(1)}}
  =
  \frac{m-1}{m}\alpha_{(1)}^{-1},
  \]
  which concludes the proof.
\end{proof}

The following theorem translates the approximation bound into a performance guarantee for the original optimization problem~\eqref{eq:rlb}.

\begin{theorem}
\label{thm:appx}
Suppose the assumptions of Lemma~\ref{lem:appx} hold.
Let $z_{\mathcal U}(r_{lb})$ denote the optimal value of~\eqref{eq:rlb} under uncertainty set $\mathcal U$ and target value $r_{lb}$. Then, the box approximation $\hat{\mathcal U}$ yields a robust solution to~\eqref{eq:rlb}, and its optimal value satisfies
\(
z_{\mathcal U}(r_{lb})
\leq
z_{\hat{\mathcal U}}(r_{lb})
\leq
z_{\mathcal U}\left(r_{lb}(1+\lambda_{\mathcal U})\right).
\)
\end{theorem}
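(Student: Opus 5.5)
The plan is to compare feasible sets directly, since the objective $\max_{c\in\mathcal C}\langle c,x\rangle$ does not involve $\mathcal U$. Write $F_{\mathcal U}(r)$ for the set of $x\in\mathcal X$ satisfying~\eqref{eq:rlb02} with uncertainty set $\mathcal U$ and target $r$; equivalently, $\min_{T\in\Omega} w_{\mathcal U}(T)\ge r$, where $w$ is as in Lemma~\ref{lem:appx} evaluated at that $x$. Then $z_{\mathcal U}(r)$ is the minimum of the common objective over $F_{\mathcal U}(r)$. Both inequalities will follow from the nestings $F_{\mathcal U}(r_{lb}(1+\lambda_{\mathcal U}))\subseteq F_{\hat{\mathcal U}}(r_{lb})\subseteq F_{\mathcal U}(r_{lb})$, because minimizing the same objective over a smaller set gives a larger value.

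\emph{Right inclusion and robustness.} Since $\mathcal U\subseteq\hat{\mathcal U}$, we have $w_{\hat{\mathcal U}}(T)\le w_{\mathcal U}(T)$ for every $T$ and every $x$; this is the upper bound in Lemma~\ref{lem:appx}, or simply monotonicity of a minimum under enlarging the feasible set. Hence any $x$ with $\min_T w_{\hat{\mathcal U}}(T)\ge r_{lb}$ also satisfies $\min_T w_{\mathcal U}(T)\ge r_{lb}$. This shows that every solution of the box-approximated problem is robust-feasible for~\eqref{eq:rlb}, and that $z_{\mathcal U}(r_{lb})\le z_{\hat{\mathcal U}}(r_{lb})$.

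\emph{Left inclusion.} Take $x\in F_{\mathcal U}(r_{lb}(1+\lambda_{\mathcal U}))$, so $w_{\mathcal U}(T)\ge (1+\lambda_{\mathcal U})r_{lb}$ for all $T\in\Omega$. Lemma~\ref{lem:appx} holds for every feasible $x$ and every $T$, and gives $w_{\hat{\mathcal U}}(T)\ge (1+\lambda_{\mathcal U})^{-1}w_{\mathcal U}(T)\ge r_{lb}$. Therefore $x\in F_{\hat{\mathcal U}}(r_{lb})$, which yields $z_{\hat{\mathcal U}}(r_{lb})\le z_{\mathcal U}(r_{lb}(1+\lambda_{\mathcal U}))$.

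\emph{Edge cases.} Lemma~\ref{lem:appx} is stated as a ratio, so I would use it in the product form $w_{\hat{\mathcal U}}(T)(1+\lambda_{\mathcal U})\ge w_{\mathcal U}(T)$. Its proof establishes exactly this product form, and it remains valid even if $w_{\mathcal U}(T)=0$. I would also adopt the convention $z=+\infty$ for an infeasible problem, so that the inclusions still give the inequalities when one of the feasible sets is empty. Finally, I would confirm that $\mathcal X$ and the objective are identical in all three problems, so that only the constraint~\eqref{eq:rlb02} changes between them.
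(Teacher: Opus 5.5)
Your proposal is correct and follows the paper's own argument. Both proofs rest on the same two feasible-set inclusions: $\mathcal U\subseteq\hat{\mathcal U}$ gives the first inequality and robustness, and the bound $w_{\hat{\mathcal U}}(T)\ge(1+\lambda_{\mathcal U})^{-1}w_{\mathcal U}(T)$ from Lemma~\ref{lem:appx} gives the second. Your product-form reading of the lemma and the $z=+\infty$ convention for infeasible problems are sound refinements that the paper leaves implicit.
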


\begin{proof}
Since $\mathcal U\subseteq\hat{\mathcal U}$, the box approximation underestimates the worst-case utility of every structure $T\in\Omega$. Hence, any solution feasible under $\hat{\mathcal U}$ is also feasible under $\mathcal U$, implying $z_{\mathcal U}(r_{lb})\leq z_{\hat{\mathcal U}}(r_{lb})$.
For the second inequality, Lemma~\ref{lem:appx} gives $w_{\hat{\mathcal U}}(T)\geq (1+\lambda_{\mathcal U})^{-1}w_{\mathcal U}(T)$ for every $T\in\Omega$. Therefore, $w_{\mathcal U}(T)\geq r_{lb}(1+\lambda_{\mathcal U})$ implies $w_{\hat{\mathcal U}}(T)\geq r_{lb}$. Thus, every solution feasible under $\mathcal U$ with target $r_{lb}(1+\lambda_{\mathcal U})$ is feasible under $\hat{\mathcal U}$ with target $r_{lb}$. Since~\eqref{eq:rlb} is a minimization problem, $z_{\hat{\mathcal U}}(r_{lb})\leq z_{\mathcal U}(r_{lb}(1+\lambda_{\mathcal U}))$, which concludes the proof.
\end{proof}

According to Theorem~\ref{thm:appx}, the box approximation is more conservative than the original formulation under $\mathcal U$, but no more conservative than solving~\eqref{eq:rlb} under $\mathcal U$ with the inflated target $r_{\mathrm{lb}}(1+\lambda_{\mathcal U})$. Thus, $\lambda_{\mathcal U}$ directly quantifies the price of tractability introduced by the box approximation.

\begin{example}
If $\mathcal U$ is an $m$-dimensional Euclidean ball of radius $0.1$ centered at $1$, then $\alpha_e=4.5$ for all $e\in E$, so $\lambda_{\mathcal U}=(m-1)/(4.5m)$. Hence,
$\frac{w_{\hat{\mathcal U}}(T)}{w_{\mathcal U}(T)}
\geq
\frac{4.5m}{5.5m-1}.$
By Theorem~\ref{thm:appx}, the box approximation is therefore no more conservative than increasing the target value by a factor of $1+\lambda_{\mathcal U}$, which is close to $1.111$ when $m=2$ and $1.222$ as $m\to\infty$. Equivalently, the approximation ratio decreases from $90\%$ to a limiting guarantee of $81.82\%$. Thus, even as the problem dimension grows, the conservatism introduced by the box approximation remains uniformly bounded.
\hfill$\triangle$
\end{example}

\section{Robust Upper-Bound Enhancement} \label{section:robut_upper_bound}
This section focuses on the robust upper-bound problem~\eqref{eq:rub}. We develop two reformulations, compare the tightness of their feasible regions, and introduce a box-set approximation reformulation.

\subsection{Linearized Reformulation}
By applying standard reformulation techniques from robust optimization
and mixed integer programming, problem \eqref{eq:rub} can be expressed
as the following formulation.

\begin{theorem}
 Problem \eqref{eq:rub} is equivalent to the following formulation:
\setlength{\abovedisplayskip}{2pt}
    \setlength{\abovedisplayshortskip}{2pt}
    \setlength{\belowdisplayskip}{2pt}
    \setlength{\belowdisplayshortskip}{2pt}
 \begin{subequations}
    \label{eq:ubmip}
  \begin{align}
    \text{Lin}: \min_{\eta,\tau,z \geq 0, x \in \mathcal X, y\in \mathcal Y_\Omega} &~ \langle g, \eta \rangle \label{eq:ubmip00}\\
    \text{s.t.} &~ G^\intercal \eta \geq x\label{eq:ubmip01}\\
                &~-H^\intercal\tau \leq y \label{eq:ubmip02} \\
                &~-\langle h, \tau \rangle + \langle z, 1\rangle \geq r_{ub} \label{eq:ubmip03} \\
                &~z \leq \hat u \cdot x \label{eq:ubmip04} \\
                &~z \leq \hat u \cdot y. \label{eq:ubmip05} 
  \end{align}
  \end{subequations}
\end{theorem}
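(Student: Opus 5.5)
The proof handles the three nested layers of \eqref{eq:rub} in turn: the objective, the inner minimization over $\mathcal U$, and the bilinear term $\langle \hat u\cdot x, y\rangle$.

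\textbf{Objective.} Fix $x$. Exactly as in the proof of Theorem~\ref{Theo:Lb_master}, we dualize $\max_{c\in\mathcal C}\langle c,x\rangle=\max\{\langle c,x\rangle\mid Gc\le g,\ c\ge 0\}$. Since $\mathcal C$ is a nonempty polytope, strong duality gives $\min\{\langle g,\eta\rangle\mid G^\intercal\eta\ge x,\ \eta\ge0\}$. This yields \eqref{eq:ubmip00}--\eqref{eq:ubmip01}. Because the outer problem is a minimization, the inner min over $\eta$ merges with the min over $x$.

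\textbf{Utility constraint.} Fix $x$ and $y\in\mathcal Y_\Omega$. Then
\[
\min_{u\in\mathcal U}\langle u+\hat u\cdot x,y\rangle=\langle \hat u\cdot x,y\rangle+\min_{u\ge0,\,Hu\le h}\langle y,u\rangle .
\]
The LP dual of the last term is $\max\{-\langle h,\tau\rangle\mid -H^\intercal\tau\le y,\ \tau\ge0\}$, where the constraint $-H^\intercal \tau\le y$ is the dual feasibility condition for the primal variable $u\ge0$. Strong duality holds because $\mathcal U$ is nonempty and compact. So \eqref{eq:rub02} holds if and only if there exist $y\in\mathcal Y_\Omega$ and $\tau\ge0$ with $-H^\intercal\tau\le y$ and
\[
-\langle h,\tau\rangle+\langle \hat u\cdot x,y\rangle\ge r_{ub}.
\]
The outer $\max_y$ and the $\max_\tau$ both appear on the left of a ``$\ge$'' constraint, so they become existential quantifiers and can be lifted into the outer minimization as decision variables. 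This is the key simplification.

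\textbf{Linearization.} It remains to handle $\langle \hat u\cdot x,y\rangle=\sum_e \hat u_e x_e y_e$ with $x,y$ binary. Introduce $z\ge0$ with $z_e\le \hat u_e x_e$ and $z_e\le \hat u_e y_e$. Then $z_e\le \hat u_e\min\{x_e,y_e\}=\hat u_e x_ey_e$, so any feasible $(x,y,\tau,z)$ satisfies
\[
-\langle h,\tau\rangle+\langle\hat u\cdot x,y\rangle\ \ge\ -\langle h,\tau\rangle+\langle z,1\rangle\ \ge\ r_{ub}.
\]
Conversely, setting $z_e=\hat u_e x_e y_e$ is feasible for \eqref{eq:ubmip04}--\eqref{eq:ubmip05}. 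Only these upper bounds on $z$ are needed, because $z$ appears only on the favorable side of \eqref{eq:ubmip03}. This gives \eqref{eq:ubmip02}--\eqref{eq:ubmip05}.

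\textbf{Equivalence.} Combining the three steps, projecting the feasible set of Lin onto $x$ recovers exactly the $x$ feasible for \eqref{eq:rub}, with the same optimal objective for each such $x$. I would state this projection argument in both directions.

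\textbf{Expected obstacle.} The delicate step is the second one. One must justify carefully that the $\max_y$ and the dual $\max_\tau$ can be merged into the outer minimization (they are existential because they sit on the left side of a $\ge$ constraint), and that strong duality applies. The linearization additionally relies on $\mathcal X$ and $\mathcal Y_\Omega$ being binary and on $\hat u\ge0$, which should be made explicit.
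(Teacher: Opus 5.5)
Your proposal is correct and follows the same route as the paper. You dualize the cost maximization over $\mathcal C$ to obtain \eqref{eq:ubmip00}--\eqref{eq:ubmip01}, fix $(x,y)$ and dualize the inner minimization over $\mathcal U$ to obtain \eqref{eq:ubmip02} together with a bilinear version of \eqref{eq:ubmip03}, and then linearize $\langle \hat u\cdot x,y\rangle$ via $z\le \hat u\cdot x$, $z\le \hat u\cdot y$; your write-up is more explicit than the paper's about strong duality, lifting $y$ and $\tau$ as existential variables, and the reliance on binary $x,y$ and $\hat u\ge 0$.
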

\begin{proof}
    Constraints \eqref{eq:ubmip00} and \eqref{eq:ubmip01} are obtained by
    fixing $x$ and dualizing the inner optimization problem over $c$, with
    $\eta$ denoting the corresponding dual variables. Similarly, after
    fixing $x$ and $y$ at the outer level, we dualize the inner problem in
    \eqref{eq:rub02}, with $\tau$ denoting the associated dual variables.
    This yields constraints \eqref{eq:ubmip02} and \eqref{eq:ubmip03}.
    Combining these constraints gives the following formulation.
    $$
      \begin{aligned}
        \min_{\eta \geq 0,\tau \geq 0, x \in \mathcal X, y\in \mathcal Y_\Omega} &~ \langle g, \eta \rangle\label{eq:rubreform00}\\
        \text{s.t.} &~ G^\intercal \eta \geq x\label{eq:rubreform01}\\
                    &~-H^\intercal\tau \leq y \label{eq:rubreform02} \\
                    &~-\langle h, \tau \rangle + \langle \hat u \cdot x, y \rangle  \geq r_{ub}. \label{eq:rubreform03}
      \end{aligned}
      $$
Since the last constraint contains a bilinear term, we apply
a standard mixed integer linearization to obtain constraints
\eqref{eq:ubmip03}--\eqref{eq:ubmip05}.
\end{proof}

\subsection{Inclusion Reformulation}

Intuitively, the upper-bound enhancement problem \eqref{eq:rub} has no incentive to
enhance elements that are not selected in the maximum utility structure
of the inner problem~\eqref{eq:rub02}. This observation motivates the following
\emph{inclusion reformulation}.
 \setlength{\abovedisplayskip}{2pt}
\setlength{\abovedisplayshortskip}{2pt}
\setlength{\belowdisplayskip}{2pt}
\setlength{\belowdisplayshortskip}{2pt}
\begin{subequations} \label{eq:ssubmip}
\begin{align}
    \text{Incl}: \min_{\eta,\tau \geq 0, x \in \mathcal X, y\in \mathcal Y_\Omega} \ &~ \langle g, \eta \rangle \label{eq:ssubmip00} \\
    \text{s.t.} &~ G^\intercal \eta \geq x\label{eq:ssubmip01}\\
                &~-H^\intercal\tau \leq y \label{eq:ssubmip02} \\
                &~-\langle h, \tau \rangle + \langle \hat u, x \rangle \geq r_{ub} \label{eq:ssubmip03} \\
                &~x \leq y \label{eq:ssubmip04},
\end{align}
\end{subequations}
where constraint \eqref{eq:ssubmip04} ensures that the selected structure includes every enhanced element. 
The following proposition establishes the correctness of~\eqref{eq:ssubmip} under the assumption that $\mathcal X$ is downward closed, i.e., $x\in\mathcal X$ and $x'\leq x$ imply $x'\in\mathcal X$ for any binary vector $x'$. This property holds, for example, when $\mathcal X=\{0,1\}^m$ or is defined by a budget constraint with nonnegative costs.


\begin{proposition} \label{prop:optlin}
  Suppose $\mathcal X$ is downward closed and \eqref{eq:ubmip} is feasible. Then there exists an optimal solution $(x^*, y^*)$ of \eqref{eq:ubmip} such that $x^* \leq y^*$.
\end{proposition}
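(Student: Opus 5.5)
Starting from any optimal solution $(\eta^*,\tau^*,z^*,x^*,y^*)$ of \eqref{eq:ubmip}, which exists because the problem is feasible and $\mathcal X\times\mathcal Y_\Omega$ is finite, I will build a new solution $(\eta',\tau^*,z^*,x',y^*)$ with $x':=x^*\cdot y^*$, i.e.\ $x'_e=\min\{x^*_e,y^*_e\}$. I will then show it is feasible for \eqref{eq:ubmip} and has objective value no larger than the original. Since $x'\le y^*$ by construction, the pair $(x',y^*)$ is the desired optimal solution.

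Feasibility is checked constraint by constraint.
\begin{itemize}
\item Because $x'\le x^*$ is binary and $\mathcal X$ is downward closed, we have $x'\in\mathcal X$.
\item Constraint \eqref{eq:ubmip02} involves only $(\tau,y)$, and both are unchanged, so it still holds. Constraint \eqref{eq:ubmip03} involves only $(\tau,z)$, which are also unchanged.
\item Constraint \eqref{eq:ubmip05} involves only $(z,y)$ and is unaffected.
\item For \eqref{eq:ubmip04}, the original solution gives $z^*_e\le \hat u_e x^*_e$ and $z^*_e\le \hat u_e y^*_e$. Hence $z^*_e\le \hat u_e\min\{x^*_e,y^*_e\}=\hat u_e x'_e$, using $\hat u\ge 0$. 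This is the key point: the linearization already forces $z^*_e=0$ whenever $y^*_e=0$, so removing the enhancement $x^*_e$ on such an edge loses nothing.
\item For \eqref{eq:ubmip01}, keeping $\eta'=\eta^*$ works, since $G^\intercal\eta^*\ge x^*\ge x'$.
\end{itemize}
The objective $\langle g,\eta'\rangle=\langle g,\eta^*\rangle$ is therefore unchanged, and the new solution is optimal.

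The main obstacle is the cost side: one must make sure that the worst-case cost representation through $\eta$ stays valid when $x$ decreases. This holds because constraint \eqref{eq:ubmip01} is monotone in $x$, since $G^\intercal\eta\ge x$ is preserved when $x$ decreases. Equivalently, $\max_{c\in\mathcal C}\langle c,x\rangle$ is nondecreasing in $x$ because $\mathcal C\subseteq\mathbb R^m_+$, which is the same fact seen through LP duality. The only other point needing care is that the modified $x'$ remains binary and in $\mathcal X$, which is exactly where the downward-closedness assumption is used.
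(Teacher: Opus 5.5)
Your proof is correct and follows the paper's argument. Both proofs zero out the enhancements on edges with $x^*_e=1$, $y^*_e=0$ (your $x'=x^*\cdot y^*$ is exactly the paper's $\tilde x$), and both rely on downward closedness and on the monotonicity of the worst-case cost in $x$. Your version is somewhat more explicit: you keep $(\eta^*,\tau^*,z^*)$ fixed and check \eqref{eq:ubmip01}--\eqref{eq:ubmip05} directly, in particular $z^*_e\le\hat u_e\min\{x^*_e,y^*_e\}$, which makes precise the paper's informal claim that the robust utility constraint is preserved.
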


\begin{proof}
Let $(z^*, x^*,y^*,\eta^*,\tau^*)$ be an optimal solution to
\eqref{eq:ubmip}. If $x^*\leq y^*$, the result follows immediately.
Suppose instead that $x^*\nleq y^*$. Then there exists at least one
edge $e\in E$ such that $x_e^*>y_e^*$. Since $x^*$ and $y^*$ are
binary, this implies that $x_e^*=1$ and $y_e^*=0$.
Let $E'=\{e\in E \mid x_e^*=1,\ y_e^*=0\}$, and define
$\tilde{x}_e=0$ for $e\in E'$ and $\tilde{x}_e=x_e^*$ otherwise. By
construction, $\tilde{x}\leq y^*$. 
Since $\mathcal X$ is downward closed, $\tilde x\in\mathcal X$. 
Moreover, removing enhancements from edges outside $y^\ast$ preserves the robust utility constraint. Since $\tilde x\le x^\ast$ and $c\ge0$ for every $c\in\mathcal C$, we have
\[
\max_{c\in\mathcal C}\langle c,\tilde x\rangle
\le
\max_{c\in\mathcal C}\langle c,x^\ast\rangle.
\]
Thus, the modified enhancement decision remains feasible and does not increase the objective value. By optimality of $x^\ast$, there exists an optimal solution satisfying $\tilde x\le y^*$.
\end{proof}

Next, we compare the strength of the linear relaxations of~\eqref{eq:ssubmip} and~\eqref{eq:ubmip}.
Let $\bar{\mathcal X}$ denote a linear relaxation of $\mathcal X$. We define the following two polyhedra
\[
 \mathcal{P}_{Lin} := \{(z,x,y,\eta,\tau) \in \mathbb{R}_+^m \times \bar{\mathcal X} \times  \Bar{\mathcal Y}_\Omega \times \mathbb{R}_+^k \times \mathbb{R}_+^\ell, \text{satisfying \eqref{eq:ubmip01}--\eqref{eq:ubmip05}}\},
 \]
 \[
  \mathcal{P}_{Incl} := \{(x,y,\eta,\tau) \in \bar{\mathcal X} \times \Bar{\mathcal Y}_\Omega \times \mathbb{R}_+^k \times \mathbb{R}_+^\ell, \text{satisfying \eqref{eq:ssubmip01}--\eqref{eq:ssubmip04}}\}.
\]
The following theorem establishes the relationship between these two feasible regions, where $\operatorname{proj}(\cdot)$ denotes the projection operator.

\begin{theorem} \label{thm:strsub}
We have
$
\mathcal{P}_{Incl} \subseteq \operatorname{proj}_{(x,y,\eta,\tau)}(\mathcal{P}_{Lin}).
$
Moreover, the inclusion is strict if the robust upper-bound problem~\eqref{eq:rub} admits a feasible solution pair $(x,y)$ with $x \not\le y$.
\end{theorem}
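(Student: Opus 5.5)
To prove the inclusion, I will take an arbitrary point $(x,y,\eta,\tau)\in\mathcal P_{Incl}$ and lift it to $\mathcal P_{Lin}$ by choosing $z:=\hat u\cdot x$. Constraints \eqref{eq:ubmip01} and \eqref{eq:ubmip02} coincide with \eqref{eq:ssubmip01} and \eqref{eq:ssubmip02}, so they hold automatically. Constraint \eqref{eq:ubmip04} holds with equality. Constraint \eqref{eq:ubmip05} follows from $x\le y$ in \eqref{eq:ssubmip04} together with $\hat u\ge0$, which give $\hat u\cdot x\le\hat u\cdot y$. Finally, $\langle z,1\rangle=\langle\hat u,x\rangle$, so \eqref{eq:ubmip03} is exactly \eqref{eq:ssubmip03}. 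Nonnegativity of $z$ follows from $x\ge0$ in $\bar{\mathcal X}$ and $\hat u\ge0$. Hence $(z,x,y,\eta,\tau)\in\mathcal P_{Lin}$, and projecting gives the inclusion.

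For strictness, I will use the given feasible pair $(x,y)$ of \eqref{eq:rub} with $x\not\le y$. By the derivation of \eqref{eq:ubmip}, feasibility of this pair in \eqref{eq:rub} means the following. First, by LP duality, the robust cost is attained by some $\eta\ge0$ with $G^\intercal\eta\ge x$, since $\mathcal C$ is a nonempty polytope. Second, the inner minimization $\min_{u\in\mathcal U}\langle u+\hat u\cdot x,y\rangle\ge r_{ub}$ dualizes to some $\tau\ge0$ satisfying \eqref{eq:ubmip02} and $-\langle h,\tau\rangle+\langle\hat u\cdot x,y\rangle\ge r_{ub}$. Setting $z:=\hat u\cdot x\cdot y$ (componentwise $\min\{\hat u_ex_e,\hat u_ey_e\}$ for binary values) satisfies \eqref{eq:ubmip03}--\eqref{eq:ubmip05}. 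Thus $(z,x,y,\eta,\tau)\in\mathcal P_{Lin}$, while its projection violates \eqref{eq:ssubmip04} and therefore lies outside $\mathcal P_{Incl}$.

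The main obstacle is the bookkeeping in the strictness part. The sign conventions in \eqref{eq:ubmip02}--\eqref{eq:ubmip03} must match the dual of $\min\{\langle u,y\rangle : Hu\le h,\ u\ge0\}$. In particular, the existence of $\tau$ must follow from strong duality, which requires feasibility and boundedness. Both hold because $\mathcal U$ is a nonempty compact polytope.

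Also, $\langle\hat u\cdot x,y\rangle$ is the only term that depends on $x$ beyond the duals. The robust constraint in \eqref{eq:rub02} contains $\min_u\langle u+\hat u\cdot x,y\rangle$, which should equal $\min_u\langle u,y\rangle+\langle\hat u\cdot x,y\rangle$. I will therefore check that this decomposition is what \eqref{eq:ubmip03} encodes through $z$. I will not need any optimality of the pair, only feasibility, so Proposition~\ref{prop:optlin} is not required here.
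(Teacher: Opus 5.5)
Your proposal is correct and follows the paper's proof: the inclusion is obtained exactly as in the paper, by lifting with $z:=\hat u\cdot x$ and using $x\le y$ together with $\hat u\ge 0$ to get \eqref{eq:ubmip05}. For strictness the paper only asserts that it follows from the definitions of the two polyhedra, and your explicit witness (a dual-feasible $\eta$, a $\tau$ from strong duality for the inner minimization, and $z:=\hat u\cdot x\cdot y$) is a valid unpacking of that assertion.
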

\begin{proof}
Let $(x,y,\eta,\tau)$ be feasible to~\eqref{eq:ssubmip}, and define $z:=\hat u\cdot x$. Then constraints~\eqref{eq:ubmip01}--\eqref{eq:ubmip04} follow directly from~\eqref{eq:ssubmip01}--\eqref{eq:ssubmip03}. Moreover, \eqref{eq:ssubmip04} gives $x\le y$, and hence $\hat u\cdot x\le \hat u\cdot y$. Therefore, $z$ also satisfies~\eqref{eq:ubmip05}, so $(z,x,y,\eta,\tau)$ is feasible to~\eqref{eq:ubmip}. This proves $\mathcal P_{Incl}\subseteq \operatorname{proj}_{(x,y,\eta,\tau)}(\mathcal P_{Lin})$. The strictness condition then follows directly from the definitions of the two polyhedra.
\end{proof}

Theorem~\ref{thm:strsub} establishes that the inclusion formulation has a linear relaxation at least as strong as that of the linearized formulation, with strict dominance under the stated condition. The inclusion formulation also requires fewer variables and constraints, which offer computational advantages.



\subsection{Approximation for the Robust Upper-Bound Enhancement}\label{subsec:Upper_appx}
The approximation approach developed in Section~\ref{sec:Lower_appx} also applies to the robust upper-bound problem. In particular, replacing the uncertainty set $\mathcal U$ in~\eqref{eq:ssubmip} with its box approximation $\hat{\mathcal U}$ eliminates the optimization over $\mathcal U$, yielding the following simplified approximation formulation.
\setlength{\abovedisplayskip}{2pt}
\setlength{\abovedisplayshortskip}{2pt}
\setlength{\belowdisplayskip}{2pt}
\setlength{\belowdisplayshortskip}{2pt}
\begin{subequations} \label{eq:UB-Inc-Box}
\begin{align}
    \min_{\eta \geq 0,\, x \in \mathcal X,\, y \in \mathcal Y_{\Omega}}
    &~ \langle g, \eta \rangle
    \label{eq:UB-Inc-Box00} \\
    \text{s.t.}\quad
    &~ G^\intercal \eta \geq x,
    \label{eq:UB-Inc-Box01}\\
    &~ x \leq y,
    \label{eq:UB-Inc-Box04}\\
    &~ \langle \underline u + \hat u \cdot x, y\rangle
    \geq r_{ub}.
    \label{eq:UB-Inc-Box03}
\end{align}
\end{subequations}
This substitution eliminates the variables $\tau$ and the associated constraint set~\eqref{eq:ssubmip02}, resulting in a more compact formulation that can potentially improve computational efficiency.

Since Lemma~\ref{lem:appx} applies independently of whether the lower-bound or upper-bound problem is considered, the box approximation of the upper-bound problem also admits a similar performance guarantee, as established below.

\begin{corollary}
\label{cor:upper_appx}
Suppose the assumptions of Lemma~\ref{lem:appx}
hold and $\mathcal X$ is downward closed.
Let $z_{\mathcal U}(r_{ub})$ denote the optimal value of~\eqref{eq:rub}
under uncertainty set $\mathcal U$ and target value $r_{ub}$. Then,
the box set approximation $\hat{\mathcal U}$ yields a robust solution to~\eqref{eq:rub}, and its optimal value satisfies
$
z_{\mathcal U}(r_{ub})
\leq
z_{\hat{\mathcal U}}(r_{ub})
\leq
z_{\mathcal U}\left(
r_{ub}\left(1+\lambda_{\mathcal U}\right)
\right).
$
\end{corollary}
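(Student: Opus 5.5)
The plan follows the proof of Theorem~\ref{thm:appx}, carried out at the level of the robust constraint~\eqref{eq:rub02}. For a fixed enhancement decision $x\in\mathcal X$, define
\[
F_{\mathcal U}(x):=\max_{T\in\Omega} w_{\mathcal U}(T),
\]
where $w_{\mathcal U}(T)$ is as in Lemma~\ref{lem:appx} and is evaluated at this $x$. Then $x$ is feasible for~\eqref{eq:rub} under $\mathcal U$ with target $r$ exactly when $F_{\mathcal U}(x)\ge r$. The objective $\max_{c\in\mathcal C}\langle c,x\rangle$ does not depend on $\mathcal U$. The proof therefore reduces to comparing the feasible $x$-sets under $\mathcal U$ and under $\hat{\mathcal U}$ for the two targets.

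\textbf{First inequality (robustness).} Since $\mathcal U\subseteq\hat{\mathcal U}$, we have $w_{\hat{\mathcal U}}(T)\le w_{\mathcal U}(T)$ for every $T$. Taking the maximum over $T$ gives $F_{\hat{\mathcal U}}(x)\le F_{\mathcal U}(x)$. Hence any $x$ with $F_{\hat{\mathcal U}}(x)\ge r_{ub}$ also satisfies $F_{\mathcal U}(x)\ge r_{ub}$. So the box solution is robust under $\mathcal U$, and $z_{\mathcal U}(r_{ub})\le z_{\hat{\mathcal U}}(r_{ub})$.

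\textbf{Second inequality.} Take any $x$ feasible under $\mathcal U$ with target $r_{ub}(1+\lambda_{\mathcal U})$, and let $T^*$ attain the maximum in $F_{\mathcal U}(x)$. Then $w_{\mathcal U}(T^*)\ge r_{ub}(1+\lambda_{\mathcal U})$. Lemma~\ref{lem:appx} holds for every $T\in\Omega$ and every feasible $x$, so
\[
w_{\hat{\mathcal U}}(T^*)\ge(1+\lambda_{\mathcal U})^{-1}w_{\mathcal U}(T^*)\ge r_{ub}.
\]
It follows that $F_{\hat{\mathcal U}}(x)\ge r_{ub}$. The feasible set for $(\mathcal U,\,r_{ub}(1+\lambda_{\mathcal U}))$ is thus contained in the feasible set for $(\hat{\mathcal U},\,r_{ub})$. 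Because the objectives coincide and the problem is a minimization, $z_{\hat{\mathcal U}}(r_{ub})\le z_{\mathcal U}(r_{ub}(1+\lambda_{\mathcal U}))$.

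\textbf{Link to the implemented model.} It remains to check that $z_{\hat{\mathcal U}}(r_{ub})$ is indeed the optimal value of the compact model~\eqref{eq:UB-Inc-Box}. This is where the assumption that $\mathcal X$ is downward closed enters. Under $\hat{\mathcal U}$ the inner minimum is attained at $\underline u$, so~\eqref{eq:rub} with $\hat{\mathcal U}$ becomes~\eqref{eq:UB-Inc-Box} without the constraint $x\le y$. Proposition~\ref{prop:optlin} applies verbatim here: dropping enhancements outside $y$ keeps the constraint $\langle \underline u+\hat u\cdot x,y\rangle\ge r_{ub}$ satisfied and does not increase the cost. Hence adding $x\le y$ leaves the optimal value unchanged.

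I expect this last identification to be the only delicate step. If $\mathcal X$ were not downward closed, the constraint $x\le y$ could cut off optimal solutions. The argument above would then still hold for the box version of~\eqref{eq:rub} itself, but not necessarily for~\eqref{eq:UB-Inc-Box}.
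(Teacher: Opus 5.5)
Your proposal is correct and follows essentially the same argument as the paper: $\mathcal U\subseteq\hat{\mathcal U}$ gives robustness and the first inequality, and Lemma~\ref{lem:appx} applied to the structure attaining the inner maximum gives the containment of feasible sets that yields the second inequality. Your explicit choice of a maximizing $T^*$, and your extra step identifying $z_{\hat{\mathcal U}}(r_{ub})$ with the optimal value of~\eqref{eq:UB-Inc-Box} via the argument of Proposition~\ref{prop:optlin}, make explicit what the paper leaves implicit; the paper's proof never invokes downward closedness, which, as you note, is needed only for that identification.
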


\begin{proof}
Since $\hat{\mathcal U}$ provides a utility underestimation for every
structure $T\in\Omega$, replacing $\mathcal U$ with
$\hat{\mathcal U}$ reduces the feasible solution space of
\eqref{eq:rub}. Therefore,
$
z_{\mathcal U}(r_{ub})
\leq
z_{\hat{\mathcal U}}(r_{ub}),
$
and every solution feasible under $\hat{\mathcal U}$ is also feasible
under $\mathcal U$.
For the second inequality, by Lemma~\ref{lem:appx},
$
w_{\hat{\mathcal U}}(T)
\geq
\left(1+\lambda_{\mathcal U}\right)^{-1}
w_{\mathcal U}(T)
$
for every $T\in\Omega$. Hence, requiring
$w_{\mathcal U}(T)\geq
r_{ub}(1+\lambda_{\mathcal U})$
ensures that
$w_{\hat{\mathcal U}}(T)\geq r_{ub}$.
Consequently, the feasible solution space associated with the target
$r_{ub}(1+\lambda_{\mathcal U})$ under $\mathcal U$ is contained in the
feasible solution space associated with the target $r_{ub}$ under
$\hat{\mathcal U}$, which yields
$
z_{\hat{\mathcal U}}(r_{ub})
\leq
z_{\mathcal U}\left(
r_{ub}(1+\lambda_{\mathcal U})
\right).
$
\end{proof}

According to Corollary~\ref{cor:upper_appx}, the box set approximation yields a formulation that is more conservative than the original robust upper-bound problem under $\mathcal U$, while remaining less conservative than increasing the target value to $r_{ub}(1+\lambda_{\mathcal U})$ in~\eqref{eq:rub}. 
Moreover, Proposition~\ref{prop:error_order} gives $\lambda_{\mathcal U}\leq (m-1)/(m\alpha_{(1)})$, so as $m\to\infty$, the error factor $\lambda_{\mathcal U}$ is uniformly bounded by $\alpha_{(1)}^{-1}=\max_{e\in E}(\bar u_e-\underline u_e)/\underline u_e$.


\section{Numerical Experiments}
\label{Section Numerical Exper}
This section conducts computational experiments to evaluate the efficiency and solution quality of the proposed methods for the lower- and upper-bound variants of the robust network enhancement problem. The reported performance measures include the average solution time, optimality gap, and ratio of the approximation objective value to the corresponding exact value, together with $\lambda_{\mathcal U}$, which is used to assess the approximation bound through Theorem~\ref{thm:appx} and Corollary~\ref{cor:upper_appx}.

We generate three independent network instances for each configuration with
$n$ vertices and $m \in \{2n,3n\}$ edges. To model the spanning tree set
$\mathcal{Y}_{\Omega}$, we adopt a single-commodity flow formulation based on
the network-flow framework for spanning trees described by
\citet{magnanti1995optimal}. Let $v_0 \in V$
denote an arbitrarily selected root vertex. For each undirected edge
$e=\{i,j\}\in E$, let $y_e$ indicate whether edge $e$ is selected, and let
$f_{ij}$ and $f_{ji}$ denote the flows along the two corresponding directed
arcs. The spanning tree set $\mathcal{Y}_{\Omega}$ is then represented by
\begin{subequations}
\label{eq:single_commodity_mst}
\begin{align}
&~ \sum_{\{j:\{i,j\}\in E\}} f_{ij}
-
\sum_{\{j:\{i,j\}\in E\}} f_{ji}
= b_i,
\quad \forall i\in V,
\label{eq:single_commodity_mst_flow}\\
&~ f_{ij}+f_{ji}
\leq (n-1)y_e,
\quad \forall e=\{i,j\}\in E,
\label{eq:single_commodity_mst_link}\\
&~ \sum_{e\in E} y_e
= n-1,
\label{eq:single_commodity_mst_card}\\
&~ f_{ij},f_{ji}
\geq 0,
\quad \forall e=\{i,j\}\in E,
\label{eq:single_commodity_mst_nonneg}\\
&~ y_e
\in\{0,1\},
\quad \forall e\in E,
\label{eq:single_commodity_mst_binary}
\end{align}
\end{subequations}
where $b_i=n-1$ if $i=v_0$, and $b_i=-1$ otherwise.
The root vertex $v_0$ sends $n-1$ units of a single
commodity, with each remaining vertex receiving one unit.
Constraints~\eqref{eq:single_commodity_mst_flow}
enforce flow balance and, together with the linking
constraints~\eqref{eq:single_commodity_mst_link},
ensure that every vertex is reachable from the root.
Constraint~\eqref{eq:single_commodity_mst_card}
requires exactly $n-1$ edges to be selected. 
Constraints~\eqref{eq:single_commodity_mst_nonneg}
and~\eqref{eq:single_commodity_mst_binary}
impose nonnegativity and binary restrictions,
respectively.

Uncertainty in the utility and cost parameters is modeled using standard
budgeted uncertainty sets commonly employed in the robust optimization
literature \citep{bertsimas2004price,busing2025recycling}. 
Specifically, the utility uncertainty set is given by
\[
\mathcal{U}
=
\left\{
u \in \mathbb{R}_+^m \;\Bigm|\;
(1-\rho)\tilde{u} \leq u \leq \tilde{u},
\quad
\sum_{e \in E}
\frac{\tilde{u}_e-u_e}{\rho\tilde{u}_e}
\leq \Gamma
\right\},
\]
whereas the cost uncertainty set is given by
\[
\mathcal{C}
=
\left\{
c \in \mathbb{R}_+^m \;\Bigm|\;
\tilde{c} \leq c \leq (1+\rho)\tilde{c},
\quad
\sum_{e \in E}
\frac{c_e-\tilde{c}_e}{\rho\tilde{c}_e}
\leq \Gamma
\right\}.
\]
The nominal utility $\tilde{u}$ and cost $\tilde{c}$ are generated from $[20,80]^m$. The enhanced utility vector is
defined as $\hat{u}=0.2\tilde{u}$.
The parameter $\rho\in\{0.01,0.05\}$ controls the maximum relative deviation
of each uncertain parameter from its nominal value. The parameter $\Gamma\in[0,m]$ specifies the total uncertainty budget. In all experiments,
we set $\Gamma=0.2m$ as the uncertainty budget level.
For the lower- and upper-bound enhancement problems, respectively, the enhancement target $r$ is set to $(1+\gamma)$ times the nominal minimum or maximum spanning-tree value, where $\gamma \in \{0.05,0.10\}$.

All experiments were conducted on a 2021 MacBook Pro equipped with an Apple
M1 Max processor and 64\,GB of memory, running macOS Sonoma
version~14.6.1. The proposed methods were implemented in Python~3.13 and solved using Gurobi~13.0, with a time limit of 3,600 seconds.

\subsection{Results for the Robust Lower-Bound Enhancement}
\label{lower bound results}

For the
robust lower-bound problem, we consider
$n\in\{20,40,60,80,100\}$ vertices. We solve the master problem
\eqref{eq:rlbreform} with two alternative separation procedures:
(i) the linearized MIP separation subproblem \eqref{eq:spmip}, referred to as
LB-MIP; and
(ii) the separation subproblem \eqref{eq:LB-Box}, in which the utility uncertainty
set $\mathcal{U}$ is replaced by its box approximation, referred to as
LB-Box. Table~\ref{tab:computational_results} provides a detailed comparison between the exact LB-MIP method and the approximate LB-Box method.

In terms of efficiency, 
LB-Box generally requires less time than LB-MIP on instances solved to optimality.
For example, for $(n,m)=(60,180)$, $\gamma=0.10$, and $\rho=0.01$, the average
solution time decreases from $257.46$ seconds for LB-MIP to
$4.53$ seconds for LB-Box. Similarly, for $(n,m)=(80,240)$,
$\gamma=0.05$, and $\rho=0.01$, LB-Box requires only $16.94$ seconds,
compared with $1759.28$ seconds for LB-MIP.
For larger and more challenging configurations where both methods reach the time limit, LB-Box generally obtains smaller gaps. For
instance, for $(n,m)=(100,300)$, the average gaps of LB-MIP range from
$4.04\%$ to $16.34\%$, compared with $0.00\%$ to $6.18\%$ for
LB-Box. Overall, the approximate
LB-Box method is more computationally efficient than
LB-MIP, which is expected as LB-Box adopts efficient minimum spanning tree algorithms for separating constraints through approximation.

The Opt. ratio column reports the average ratio of the objective value obtained by LB-Box to that obtained by LB-MIP, providing an empirical measure of approximation quality. As shown in Table~\ref{tab:computational_results}, this ratio generally remains around $1.1$ when $\rho=0.01$, while larger deviations, such as $1.51$ for $(n,m)=(80,160)$, occur when $\rho=0.05$. On the other hand, $\lambda_{\mathcal U}$ theoretically quantifies the approximation quality, as Proposition~\ref{prop:error_order} establishes that the approximation error for the subproblems in \eqref{eq:rlb02} and \eqref{eq:rub02} is of order $O(\lambda_{\mathcal U})$. Consistent with Theorem~\ref{thm:appx}, smaller values of $\lambda_{\mathcal U}$ yield tighter approximation guarantees, whereas larger values permit potentially greater approximation errors. 

Therefore, LB-Box provides a balance between computational efficiency and solution quality. It reduces solution times and final optimality gaps while maintaining objective values close to those of LB-MIP.

\begin{table}[!tbp]
\TABLE
{\raggedright
Comparison of the box set approximation and linearized MIP formulations for the
robust lower-bound problem.
\label{tab:computational_results}
}
{
\scriptsize
\setlength{\tabcolsep}{9pt}
\renewcommand{\arraystretch}{1}

\makebox[\textwidth][c]{%
\begin{tabular}{ccccccccc}
\toprule

\multicolumn{3}{c}{Configuration}
&
\multicolumn{2}{c}{Time (s)}
&
\multicolumn{2}{c}{Gap (\%)}
&
\multirow{2}{*}{Opt. ratio}
&
\multirow{2}{*}{$\lambda_{\mathcal U}$}
\\

\cmidrule(lr){1-3}
\cmidrule(lr){4-5}
\cmidrule(lr){6-7}

$(n,m)$
&
$\gamma$
&
$\rho$
&
LB-MIP
&
LB-Box
&
LB-MIP
&
LB-Box
&
&
\\

\midrule


\multirow{4}{*}{$(20,40)$}
&
\multirow{2}{*}{$0.05$}
&
$0.01$
&
$0.33$ & \textbf{0.04} & \textbf{0.00} & \textbf{0.00} & $1.13$ & $0.01$
\\

&
&
$0.05$
&
$0.53$ & \textbf{0.06} & \textbf{0.00} & \textbf{0.00} & $1.39$ & $0.05$
\\

\cmidrule(lr){2-9}

&
\multirow{2}{*}{$0.10$}
&
$0.01$
&
$0.64$ & \textbf{0.04} & \textbf{0.00} & \textbf{0.00} & $1.04$ & $0.01$
\\

&
&
$0.05$
&
$0.82$ & \textbf{0.06} & \textbf{0.00} & \textbf{0.00} & $1.28$ & $0.05$
\\

\cmidrule(lr){1-9}


\multirow{4}{*}{$(20,60)$}
&
\multirow{2}{*}{$0.05$}
&
$0.01$
&
$0.63$ & \textbf{0.06} & \textbf{0.00} & \textbf{0.00} & $1.07$ & $0.01$
\\

&
&
$0.05$
&
$1.00$ & \textbf{0.09} & \textbf{0.00} & \textbf{0.00} & $1.24$ & $0.05$
\\

\cmidrule(lr){2-9}

&
\multirow{2}{*}{$0.10$}
&
$0.01$
&
$1.59$ & \textbf{0.09} & \textbf{0.00} & \textbf{0.00} & $1.03$ & $0.01$
\\

&
&
$0.05$
&
$2.37$ & \textbf{0.13} & \textbf{0.00} & \textbf{0.00} & $1.14$ & $0.05$
\\

\cmidrule(lr){1-9}


\multirow{4}{*}{$(40,80)$}
&
\multirow{2}{*}{$0.05$}
&
$0.01$
&
$1.65$ & \textbf{0.07} & \textbf{0.00} & \textbf{0.00} & $1.11$ & $0.01$
\\

&
&
$0.05$
&
$3.40$ & \textbf{0.33} & \textbf{0.00} & \textbf{0.00} & $1.46$ & $0.05$
\\

\cmidrule(lr){2-9}

&
\multirow{2}{*}{$0.10$}
&
$0.01$
&
$10.21$ & \textbf{0.35} & \textbf{0.00} & \textbf{0.00} & $1.06$ & $0.01$
\\

&
&
$0.05$
&
$14.67$ & \textbf{0.67} & \textbf{0.00} & \textbf{0.00} & $1.30$ & $0.05$
\\

\cmidrule(lr){1-9}


\multirow{4}{*}{$(40,120)$}
&
\multirow{2}{*}{$0.05$}
&
$0.01$
&
$6.28$ & \textbf{0.14} & \textbf{0.00} & \textbf{0.00} & $1.07$ & $0.01$
\\

&
&
$0.05$
&
$20.92$ & \textbf{1.58} & \textbf{0.00} & \textbf{0.00} & $1.26$ & $0.05$
\\

\cmidrule(lr){2-9}

&
\multirow{2}{*}{$0.10$}
&
$0.01$
&
$39.75$ & \textbf{2.20} & \textbf{0.00} & \textbf{0.00} & $1.04$ & $0.01$
\\

&
&
$0.05$
&
$66.35$ & \textbf{14.34} & \textbf{0.00} & \textbf{0.00} & $1.16$ & $0.05$
\\

\cmidrule(lr){1-9}


\multirow{4}{*}{$(60,120)$}
&
\multirow{2}{*}{$0.05$}
&
$0.01$
&
$30.87$ & \textbf{0.58} & \textbf{0.00} & \textbf{0.00} & $1.11$ & $0.01$
\\

&
&
$0.05$
&
$105.71$ & \textbf{12.93} & \textbf{0.00} & \textbf{0.00} & $1.44$ & $0.05$
\\

\cmidrule(lr){2-9}

&
\multirow{2}{*}{$0.10$}
&
$0.01$
&
$313.56$ & \textbf{23.23} & \textbf{0.00} & \textbf{0.00} & $1.07$ & $0.01$
\\

&
&
$0.05$
&
$503.77$ & \textbf{264.19} & \textbf{0.00} & \textbf{0.00} & $1.31$ & $0.05$
\\

\cmidrule(lr){1-9}


\multirow{4}{*}{$(60,180)$}
&
\multirow{2}{*}{$0.05$}
&
$0.01$
&
$37.65$ & \textbf{0.57} & \textbf{0.00} & \textbf{0.00} & $1.07$ & $0.01$
\\

&
&
$0.05$
&
$130.83$ & \textbf{2.88} & \textbf{0.00} & \textbf{0.00} & $1.25$ & $0.05$
\\

\cmidrule(lr){2-9}

&
\multirow{2}{*}{$0.10$}
&
$0.01$
&
$257.46$ & \textbf{4.53} & \textbf{0.00} & \textbf{0.00} & $1.04$ & $0.01$
\\

&
&
$0.05$
&
$1341.41$ & \textbf{175.21} & $0.30$ & \textbf{0.00} & $1.17$ & $0.05$
\\

\cmidrule(lr){1-9}


\multirow{4}{*}{$(80,160)$}
&
\multirow{2}{*}{$0.05$}
&
$0.01$
&
$1231.58$ & \textbf{0.46} & $26.72$ & \textbf{0.00} & $1.04$ & $0.01$
\\

&
&
$0.05$
&
$1432.25$ & \textbf{147.94} & \textbf{0.00}$^{*}$ & \textbf{0.00} & $1.51^* $ & $0.05$
\\

\cmidrule(lr){2-9}

&
\multirow{2}{*}{$0.10$}
&
$0.01$
&
$2416.11$ & \textbf{278.58} & $1.95^*$ & \textbf{0.00} & $1.06^*$ & $0.01$
\\

&
&
$0.05$
&
$2444.24$ & \textbf{1505.17} & $4.79^*$ & \textbf{1.52} & $1.30^* $ & $0.05$
\\

\cmidrule(lr){1-9}


\multirow{4}{*}{$(80,240)$}
&
\multirow{2}{*}{$0.05$}
&
$0.01$
&
$1759.28$ & \textbf{16.94} & \textbf{0.00} & \textbf{0.00} & $1.07$ & $0.01$
\\

&
&
$0.05$
&
$3418.73$ & \textbf{1157.81} & $8.24$ & \textbf{0.00} & $1.19$ & $0.05$
\\

\cmidrule(lr){2-9}

&
\multirow{2}{*}{$0.10$}
&
$0.01$
&
$3087.95$ & \textbf{1047.47} & $10.14$ & \textbf{0.00} & $0.99$ & $0.01$
\\

&
&
$0.05$
&
$3600.00$ & \textbf{3540.79} & $12.53$ & \textbf{1.52} & $1.10$ & $0.05$
\\

\cmidrule(lr){1-9}


\multirow{4}{*}{$(100,200)$}
&
\multirow{2}{*}{$0.05$}
&
$0.01$
&
$72.95$ & \textbf{1.49} & \textbf{0.00} & \textbf{0.00} & $1.12$ & $0.01$
\\

&
&
$0.05$
&
\textbf{349.50} & $376.67$ & \textbf{0.00} & \textbf{0.00} & $1.47$ & $0.05$
\\

\cmidrule(lr){2-9}

&
\multirow{2}{*}{$0.10$}
&
$0.01$
&
$2348.66$ & \textbf{887.80} & $1.64$ & \textbf{0.00} & $1.07$ & $0.01$
\\

&
&
$0.05$
&
$3600.00$ & \textbf{1610.91} & $5.90$ & \textbf{1.07} & $1.29$ & $0.05$
\\

\cmidrule(lr){1-9}


\multirow{4}{*}{$(100,300)$}
&
\multirow{2}{*}{$0.05$}
&
$0.01$
&
$2479.44$ & \textbf{217.42} & $4.04$ & \textbf{0.00} & $1.06$ & $0.01$
\\

&
&
$0.05$
&
\textbf{3600.00} & \textbf{3600.00} & $8.60^*$ & \textbf{3.43} & $1.23^*$ & $0.05$
\\

\cmidrule(lr){2-9}

&
\multirow{2}{*}{$0.10$}
&
$0.01$
&
\textbf{3600.00} & \textbf{3600.00} & $10.05^*$ & \textbf{2.98} & $1.00^*$ & $0.01$
\\

&
&
$0.05$
&
\textbf{3600.00} & \textbf{3600.00} & $16.34^*$ & \textbf{6.18} & $1.08^*$ & $0.05$
\\

\bottomrule
\end{tabular}%
}
}
{}\begingroup\fontsize{8}{12}\selectfont\emph{Notes.}
The ``Opt. ratio'' column reports the average ratio of the objective value obtained by \emph{LB-Box} to the corresponding incumbent objective value obtained by \emph{LB-MIP}. The $\lambda_{\mathcal U}$ column measures the potential approximation error induced by the box approximation. An asterisk $\ast$ indicates that one of the three instances had no feasible solution within the 3600-second time limit; the reported average is based on the remaining two instances.
\par
\endgroup
\end{table}

\subsection{Results for the Robust Upper-Bound Enhancement}

\label{subsec:upper-bound-results}

The computational study of the robust upper-bound problem considers networks
with $n \in \{100,\allowbreak 200,\allowbreak 300,\allowbreak
400,\allowbreak 500\}$. Three solution approaches are evaluated:
\textnormal{(i)} the linearized reformulation \eqref{eq:ubmip}, referred to as
UB-Lin; \textnormal{(ii)} the inclusion-based reformulation
\eqref{eq:ssubmip}, referred to as UB-Incl; and
\textnormal{(iii)} the inclusion-based reformulation \eqref{eq:UB-Inc-Box} with box approximation, referred to as UB-Incl-Box. 

Table~\ref{tab:upper_bound_computational_results} shows that the
performance of the three formulations varies across the different configurations.
On smaller instances, all three formulations are solved to
optimality with comparable running times, with UB-Incl-Box generally being the most efficient. For example, for $(n,m)=(200,400)$, $\gamma=0.05$, and $\rho=0.01$,
UB-Incl-Box requires $21.12$ seconds, compared with $56.05$ seconds for
UB-Lin and $456.11$ seconds for UB-Incl. Similarly, for $(n,m)=(200,600)$, $\gamma=0.10$, and $\rho=0.05$, UB-Incl-Box solves the problem in $20.94$ seconds, versus $26.54$ seconds for UB-Lin and $29.03$ seconds for UB-Incl.
For larger configurations, several formulations reach the time limit. In these cases, the gaps
remain small across all three formulations and are all below $1\%$.
The largest reported gap for UB-Incl-Box is $0.56\%$, compared with
$0.76\%$ for UB-Lin and $0.67\%$ for UB-Incl. Moreover,
UB-Incl-Box yields the smallest gap in most large
configurations. For example, for $(n,m)=(500,1000)$, $\gamma=0.10$, and
$\rho=0.05$, its average gap is $0.37\%$, compared with $0.44\%$ and
$0.46\%$ for UB-Lin and UB-Incl, respectively. 


The Opt. ratio column exhibits a pattern of empirical approximation quality similar to that observed for the lower-bound problem. The ratio generally remains around $1.1$ when $\rho=0.01$, whereas larger deviations occur when $\rho=0.05$, reaching $1.53$ for $(n,m)=(500,1000)$. The $\lambda_{\mathcal U}$ column provides the corresponding theoretical measure. Consistent with Corollary~\ref{cor:upper_appx}, smaller values of $\lambda_{\mathcal U}$ yield tighter approximation guarantees, whereas larger values allow greater approximation errors. 
However, for comparable values of $\lambda_{\mathcal U}$, the objective ratios in the robust upper-bound experiments are often slightly larger than those in the robust lower-bound experiments, with average ratios of 1.21 and 1.17, respectively.
This indicates that, although $\lambda_{\mathcal U}$ captures the same potential error induced by the box approximation, the realized approximation quality also depends on the sensitivity of the underlying enhancement problem. In this respect, the robust upper-bound problem appears to be more sensitive to the box approximation than the robust lower-bound problem, at least for the instances tested.

Overall, the three formulations exhibit comparable computational performance for the robust upper-bound problem, while UB-Incl-Box often achieves shorter solution times at the cost of a bounded loss in solution quality.

\begin{table}[!tbp]
\TABLE
{\raggedright
Computational comparison of the proposed formulations for the robust
upper-bound problem.
\label{tab:upper_bound_computational_results}
}
{
\scriptsize
\setlength{\tabcolsep}{6pt}
\renewcommand{\arraystretch}{1.2}

\makebox[\textwidth][c]{%
\begin{tabular}{ccccccccccc}
\toprule

\multicolumn{3}{c}{Configuration}
&
\multicolumn{3}{c}{Time (s)}
&
\multicolumn{3}{c}{Gap (\%)}
&
\multirow{2}{*}{Opt. ratio}
&
\multirow{2}{*}{$\lambda_{\mathcal U}$}
\\

\cmidrule(lr){1-3}
\cmidrule(lr){4-6}
\cmidrule(lr){7-9}

$(n,m)$
&
$\gamma$
&
$\rho$
&
UB-Lin
&
UB-Incl
&
UB-Incl-Box
&
UB-Lin
&
UB-Incl
&
UB-Incl-Box
&
&
\\

\midrule


\multirow{4}{*}{$(100,200)$}
&
\multirow{2}{*}{$0.05$}
&
$0.01$
&
$1.72$ & $1.73$ & \textbf{1.11} & \textbf{0.00} & \textbf{0.00} & \textbf{0.00} & $1.12$ & $0.01$
\\

&
&
$0.05$
&
$1.55$ & $1.50$ & \textbf{1.27} & \textbf{0.00} & \textbf{0.00} & \textbf{0.00} & $1.52$ & $0.05$
\\

\cmidrule(lr){2-11}

&
\multirow{2}{*}{$0.10$}
&
$0.01$
&
$1.45$ & $1.44$ & \textbf{1.09} & \textbf{0.00} & \textbf{0.00} & \textbf{0.00} & $1.08$ & $0.01$
\\

&
&
$0.05$
&
$2.15$ & $1.53$ & \textbf{1.31} & \textbf{0.00} & \textbf{0.00} & \textbf{0.00} & $1.34$ & $0.05$
\\

\cmidrule(lr){1-11}


\multirow{4}{*}{$(100,300)$}
&
\multirow{2}{*}{$0.05$}
&
$0.01$
&
$4.26$ & $4.73$ & \textbf{2.98} & \textbf{0.00} & \textbf{0.00} & \textbf{0.00} & $1.08$ & $0.01$
\\

&
&
$0.05$
&
$3.99$ & \textbf{2.86} & $3.83$ & \textbf{0.00} & \textbf{0.00} & \textbf{0.00} & $1.30$ & $0.05$
\\

\cmidrule(lr){2-11}

&
\multirow{2}{*}{$0.10$}
&
$0.01$
&
$6.44$ & $5.77$ & \textbf{2.77} & \textbf{0.00} & \textbf{0.00} & \textbf{0.00} & $1.05$ & $0.01$
\\

&
&
$0.05$
&
$6.31$ & $5.40$ & \textbf{2.66} & \textbf{0.00} & \textbf{0.00} & \textbf{0.00} & $1.20$ & $0.05$
\\

\cmidrule(lr){1-11}


\multirow{4}{*}{$(200,400)$}
&
\multirow{2}{*}{$0.05$}
&
$0.01$
&
$56.05$ & $456.11$ & \textbf{21.12} & \textbf{0.00} & \textbf{0.00} & \textbf{0.00} & $1.12$ & $0.01$
\\

&
&
$0.05$
&
$165.00$ & $136.71$ & \textbf{40.56} & \textbf{0.00} & \textbf{0.00} & \textbf{0.00} & $1.51$ & $0.05$
\\

\cmidrule(lr){2-11}

&
\multirow{2}{*}{$0.10$}
&
$0.01$
&
$19.41$ & \textbf{18.45} & $24.59$ & \textbf{0.00} & \textbf{0.00} & \textbf{0.00} & $1.07$ & $0.01$
\\

&
&
$0.05$
&
$13.47$ & $18.70$ & \textbf{9.72} & \textbf{0.00} & \textbf{0.00} & \textbf{0.00} & $1.34$ & $0.05$
\\

\cmidrule(lr){1-11}


\multirow{4}{*}{$(200,600)$}
&
\multirow{2}{*}{$0.05$}
&
$0.01$
&
$29.03$ & \textbf{20.83} & $32.27$ & \textbf{0.00} & \textbf{0.00} & \textbf{0.00} & $1.08$ & $0.01$
\\

&
&
$0.05$
&
$20.47$ & $21.33$ & \textbf{15.48} & \textbf{0.00} & \textbf{0.00} & \textbf{0.00} & $1.30$ & $0.05$
\\

\cmidrule(lr){2-11}

&
\multirow{2}{*}{$0.10$}
&
$0.01$
&
$24.04$ & $16.57$ & \textbf{13.91} & \textbf{0.00} & \textbf{0.00} & \textbf{0.00} & $1.05$ & $0.01$
\\

&
&
$0.05$
&
$26.54$ & $29.03$ & \textbf{20.94} & \textbf{0.00} & \textbf{0.00} & \textbf{0.00} & $1.20$ & $0.05$
\\

\cmidrule(lr){1-11}


\multirow{4}{*}{$(300,600)$}
&
\multirow{2}{*}{$0.05$}
&
$0.01$
&
$1280.86$ & \textbf{1229.48} & $1233.51$ & $0.26$ & $0.28$ & \textbf{0.23} & $1.13$ & $0.01$
\\

&
&
$0.05$
&
$1255.78$ & \textbf{1231.19} & $1281.97$ & $0.18$ & $0.16$ & \textbf{0.13} & $1.52$ & $0.05$
\\

\cmidrule(lr){2-11}

&
\multirow{2}{*}{$0.10$}
&
$0.01$
&
$1271.12$ & \textbf{1270.88} & $1298.27$ & \textbf{0.10} & $0.12$ & $0.13$ & $1.07$ &$ 0.01$
\\

&
&
$0.05$
&
$1241.59$ & $1249.87$ & \textbf{1222.78} & $0.11$ & $0.10$ & \textbf{0.04} & $1.34$ & $0.05$
\\

\cmidrule(lr){1-11}


\multirow{4}{*}{$(300,900)$}
&
\multirow{2}{*}{$0.05$}
&
$0.01$
&
\textbf{728.03} & $598.70$ & $1249.15$ & \textbf{0.00} & \textbf{0.00} & $0.07$ & $1.08$ & $0.01$
\\

&
&
$0.05$
&
$1278.88$ & \textbf{667.94} & $1542.93$ & $0.06$ & \textbf{0.00} & $0.03$ &$1.30$& $0.05$
\\

\cmidrule(lr){2-11}

&
\multirow{2}{*}{$0.10$}
&
$0.01$
&
$954.43$ & \textbf{947.79} & $2410.14$ & \textbf{0.00} & \textbf{0.00} & $0.04$ & $1.05$ & $0.01$
\\

&
&
$0.05$
&
$1586.83$ & $2327.39$ & \textbf{1526.27} & \textbf{0.11} & \textbf{0.11} & $0.13$ & $1.20$ &$ 0.05$
\\

\cmidrule(lr){1-11}


\multirow{4}{*}{$(400,800)$}
&
\multirow{2}{*}{$0.05$}
&
$0.01$
&
\textbf{2415.06} & $2418.65$ & $2420.99$ & $0.31$ & $0.42$ & \textbf{0.25} & $1.13$ & $0.01$
\\

&
&
$0.05$
&
\textbf{2414.38} & $2414.66$ & $2418.10$ & \textbf{0.17} & $0.18$ & $0.22$ & $1.52$& $0.05$
\\

\cmidrule(lr){2-11}

&
\multirow{2}{*}{$0.10$}
&
$0.01$
&
$2420.19$ & \textbf{2415.88} & $2417.56$ & $0.22$ & $0.22$ & \textbf{0.20} & $1.07$ & $0.01$
\\

&
&
$0.05$
&
$2440.07$ & \textbf{2423.58} & $2457.53$ & $0.21$ & $0.21$ & \textbf{0.15} & $1.34$ & $0.05$
\\

\cmidrule(lr){1-11}


\multirow{4}{*}{$(400,1200)$}
&
\multirow{2}{*}{$0.05$}
&
$0.01$
&
$1269.17$ & $1308.03$ & \textbf{1254.63} & $0.23$ & $0.24$ & \textbf{0.18} & $1.08$ & $0.01$
\\

&
&
$0.05$
&
$1278.52$ & \textbf{1268.26} & $1309.52$ & $0.12$ & $0.12$ & \textbf{0.07} & $1.30$ & $0.05$
\\

\cmidrule(lr){2-11}

&
\multirow{2}{*}{$0.10$}
&
$0.01$
&
$1315.15$ & $1320.23$ & \textbf{1315.03} & $0.08$ & $0.08$ & \textbf{0.07} & $1.05$ & $0.01$
\\

&
&
$0.05$
&
$1414.05$ & $1427.79$ & \textbf{1370.80} & $0.10$ & \textbf{0.07} & \textbf{0.07} & $1.20$ & $0.05$
\\

\cmidrule(lr){1-11}


\multirow{4}{*}{$(500,1000)$}
&
\multirow{2}{*}{$0.05$}
&
$0.01$
&
$2800.62$ & \textbf{2698.31} & $3600.00$ & $0.76$ & $0.67$ & \textbf{0.56} & $1.13$ & $0.01$
\\

&
&
$0.05$
&
$3600.00$ & \textbf{3355.60} & $3600.00$ & $0.50$ & $0.52$ & \textbf{0.47} & $1.53$ & $0.05$
\\

\cmidrule(lr){2-11}

&
\multirow{2}{*}{$0.10$}
&
$0.01$
&
\textbf{3600.00} & \textbf{3600.00} & \textbf{3600.00} & $0.49$ & $0.45$ & \textbf{0.41} & $1.07$ & $0.01$
\\

&
&
$0.05$
&
\textbf{3600.00} & \textbf{3600.00} & \textbf{3600.00} & $0.44$ & $0.46$ & \textbf{0.37} & $1.34$ & $0.05$
\\

\cmidrule(lr){1-11}


\multirow{4}{*}{$(500,1500)$}
&
\multirow{2}{*}{$0.05$}
&
$0.01$
&
$2587.44$ & $2518.81$ & \textbf{2502.63} & \textbf{0.24} & $0.29$ & $0.28$ & $1.08$ & $0.01$
\\

&
&
$0.05$
&
\textbf{2720.37} & $2958.32$ & $3600.00$ & $0.22$ & $0.22$ & \textbf{0.21} & $1.30$ & $0.05$
\\

\cmidrule(lr){2-11}

&
\multirow{2}{*}{$0.10$}
&
$0.01$
&
$3600.00$ & \textbf{3474.86} & $3600.00$ & $0.20$ & $0.20$ & \textbf{0.19} & $1.05$ & $0.01$
\\

&
&
$0.05$
&
$3600.00$ & $3600.00$ & \textbf{3073.37} & $0.17$ & $0.20$ & \textbf{0.14} & $1.20$ & $0.05$
\\

\bottomrule
\end{tabular}%
}
}
{}\begingroup\fontsize{8}{12}\selectfont\emph{Notes.}
The ``Opt. ratio'' column reports the average ratio of the objective value obtained by \emph{UB-Incl-Box} to the corresponding incumbent objective value obtained by the exact formulations, \emph{UB-Lin} and \emph{UB-Incl}. The $\lambda_{\mathcal U}$ column measures the potential approximation error induced by the box approximation.
\par
\endgroup
\end{table}


\newpage
\section{Conclusion}
\label{sec:conclusion}
This paper studies robust network enhancement through lower- and upper-bound formulations under uncertain network utilities and enhancement costs. We develop exact reformulations for both problems and examine their computational properties. For the lower-bound problem, we identify the robust separation subproblem as a key computational challenge and characterize the emergence of fractional extreme points in its linearized formulation. For the upper-bound problem, we derive two exact reformulations and compare their linear relaxation strength.

To improve computational tractability, we develop a box approximation of the utility uncertainty set that preserves robust feasibility and provides explicit performance guarantees for both problems. Computational experiments on spanning tree enhancement show that the approximation improves computational efficiency while maintaining solution quality close to the exact formulations. These results illustrate how uncertainty-set approximation can balance computational tractability and solution quality in robust network enhancement.

One direction for future exploration is to characterize robust counterparts of facet-defining inequalities from classical network optimization, with the aim of developing stronger formulations that exploit the underlying polyhedral structure under uncertainty.


\newpage
\bibliographystyle{plainnat}
\bibliography{bibi/references}

\end{document}